\titleformat{\subsection}{\it}{\thesubsection.\enspace}{1.5pt}{}
\titleformat{\subsubsection}{\it}{\thesubsubsection.\enspace}{1.5pt}{}
\newtheorem{theo}{Theorem}[section]
\newtheorem{lemm}[theo]{Lemma}
\newtheorem{defi}[theo]{Definition}
\numberwithin{equation}{section}
\def\th2{\frac{\theta}{2}}
\begin{document}

\title{On a nonisothermal  ideal gas Navier-Stokes-Fourier equations\hspace{-4mm}}
\author{ Boling Guo$^1$, Binqiang Xie$^2*$}
\date{}
\maketitle
\begin{center}
\begin{minipage}{120mm}
\emph{\small $^1$Institute of Applied Physics and Computational Mathematics, China Academy of Engineering Physics,
 Beijing, 100088, P. R. China \\
$^2$Graduate School of China Academy of Engineering Physics, Beijing, 100088, P. R. China  }
\end{minipage}
\end{center}

\footnotetext{Email: \it gbl@iapcm.ac.cn(B.L.Guo), \it xbq211@163.com(B.Q.Xie).}
\date{}

\maketitle

\begin{abstract}

In this paper we are concerned with a non-isothermal compressible Navier-Stokes-Fourier model with density dependent viscosity that vanish on the vacuum. We prove sequential stability of variational weak solutions in periodic domain $\Omega=T^{3}$. The main point is that the pressure is given by $P=R\rho \theta$.

\vspace*{5pt}
\noindent{\it {\rm Keywords}}:
sequential stability; weak solutions; compressible non-isothermal model.

\vspace*{5pt}
\noindent{\it {\rm 2010 Mathematics Subject Classification}}:
76W05, 35Q35, 35D05, 76X05.
\end{abstract}


\section{Introduction}
\quad\quad A compressible and heat-conducting fluid governed by the Navier-Stokes-Fourier equations satisfies the following system in $R_{+}\times \Omega$:
\begin{gather}
  \partial_{t}\rho+{\rm div}(\rho u) =0,\label{1.1} \\
  \partial_{t}(\rho u)+{\rm div}(\rho u\otimes u)
  +\nabla P={\rm div}\mathbb{S}, \label{1.2}\\
  \partial_{t} (\rho E) + {\rm div}(\rho E u)+{\rm div} q+ {\rm div}(P u)={\rm div} (\mathbb{S}u),\label{1.3}
\end{gather}
where the functions $\rho, u,\theta $ represent the density,the velocity field, the absolute temperature. $P$ stands for the pressure, $\mathbb{S}$ denotes the viscous stress tensor. $\rho E=\rho e+ \frac{\rho |u|^{2}}{2}$ the total energy, $e$ the internal energy. $q$ the heat flux. Eqs. \eqref{1.1}, \eqref{1.2}, \eqref{1.3} respectively express the conservation of mass, momentum and total energy.

Our analysis is based on the following physically grounded assumptions:
\begin{itemize}
\item The viscosity stress tensor $\mathbb{S}$ is determined by the Newton's rheological law
\begin{equation}\label{1.4}
\mathbb{S}=2\mu(\rho) D(u)+\lambda(\rho) {\rm div}_{x} u \mathbb{I},
\end{equation}
where $3\lambda + 2\mu\geq 0$ and $D(u)= \frac{1}{2}(\nabla u+ \nabla^{T} u)$ denotes the strain rate tensor, we require $\lambda(\rho)= 2(\rho \mu^{\prime}(\rho)- \mu(\rho))$. For simplicity, we only consider a particular case $\mu(\rho)=\rho, \lambda(\rho)=0$.

\item A key element of the system \eqref{1.1}-\eqref{1.3} is pressure $P$, which obeys the following equation of state:
\begin{equation}\label{1.5}
P(\rho,\theta)=R\rho\theta,
\end{equation}
where R is the perfect gas constant, for simplicity, we set $R=1$. This assumption means ideal gas given by Boyle's law.
\end{itemize}

\begin{itemize}
\item In accordance with the second thermodynamics law, the form of the internal energy reads:
\begin{equation}\label{1.6}
e=C_{\nu}\theta ,
\end{equation}
where $C_{\nu}$ is termed the specific heat at constant volume, for simplicity, we set $C_{\nu}=1$.

\item The heat flux $q$ is expressed through the classical Fourier's law:
\begin{equation}\label{1.7}
q=-\kappa\nabla \theta ,
\end{equation}
where the heat conducting coefficient $\kappa$ is assumed to satisfy:
\begin{equation}\label{1.8}
\kappa(\rho,\theta)= \kappa_{0}(\rho,\theta)(1+\rho)(1+\theta^{a}),
\end{equation}
where $a\geq 2$, $\kappa_{0}$ is a continuous function of temperature and density satisfying:
$ C_{1} \leq \kappa_{0} (\rho, \theta) \leq \frac{1}{C_{1}}$, for some positive $C_{1}$.
\end{itemize}

To complete the system \eqref{1.1}-\eqref{1.3}, the initial conditions are given by
\begin{equation}\label{1.9}
\rho(0,\cdot)=\rho_{0}, (\rho u)(0,\cdot)=m_{0}, \theta(0,\cdot)=\theta_{0},
\end{equation}
together with the compatibility condition:
\begin{equation}\label{1.10}
m_{0}=0~~on~the~set~~\{x\in \Omega|\rho_{0}(x)=0\}.
\end{equation}

If the solutions are smooth, the temperature equation is easier to deduce as follows:
\begin{equation}\label{1.11}
\partial_{t} (\rho \theta)+ {\rm div} (\rho \theta u) =\mathbb{S}: \nabla u+ {\rm div} (\kappa\nabla \theta)-\rho \theta {\rm div} u .
\end{equation}
Finally, assuming the pressure and internal energy, we can define the specific entropy through Gibbs relationship.
Accordingly, the temperature equation may be put into an equivalent form of the entropy equation:
\begin{equation}\label{1.12}
\partial_{t} (\rho s)+ {\rm div} (\rho s u)-{\rm div}\bigg(\frac{\kappa \nabla \theta}{\theta}\bigg) =\frac{\mathbb{S}: \nabla u}{\theta}+ \frac{\kappa|\nabla \theta|^{2}}{\theta^{2}}.
\end{equation}
where $s= \ln \theta-\ln \rho$.

In the follows we give the definition of a variational solution to \eqref{1.1}-\eqref{1.10}.

\begin{defi}
We call $(\rho, u, \theta)$ is as  a varational  weak solution to the problem \eqref{1.1}-\eqref{1.10}, if the following is satisfied.

(1)the density $\rho$ is a non-negative function satisfying the internal identity
\begin{equation}\label{1.13}
\int_{0}^{T} \int_{\Omega} \rho \partial_{t} \phi + \rho u\cdot \nabla \phi dx dt+ \int_{\Omega} \rho_{0} \phi(0)dx=0,
\end{equation}
for any test function $\phi\in \mathcal{D}([0,T)\times \overline{\Omega})$.

(2) The momentum equation holds in $D^{\prime}((0,T)\times\Omega)$, that means,
\begin{equation}\label{1.14}
\begin{aligned}
&\int_{\Omega}m_{0} \phi(0) dx+\int_{0}^{T} \int_{\Omega} \rho u \cdot \partial_{t} \phi + \rho (u\otimes u): \nabla \phi+ P {\rm div} \phi dx dt\\
&= \int_{0}^{T} \int_{\Omega} \mathbb{S}: \nabla \phi dx dt,~for~any ~\phi\in \mathcal{D}([0,T)\times \overline{\Omega}),
\end{aligned}
\end{equation}

(3) If we admit there are variational solutions that may fail to satisfy the weak formulation of the total energy balance, thus this can be expressed by the variational principle of entropy production:
\begin{equation}\label{1.15}
\int_{\Omega} \rho_{0} s_{0} \phi(0)dx +\int_{0}^{T} \int_{\Omega} \rho s \partial_{t} \phi + \rho s u \cdot \nabla \phi- \frac{\kappa \nabla \theta}{\theta} \nabla \phi dx dt+ <\sigma,\phi>=0
\end{equation}
It is satisfied for any smooth function $\phi(x,t)$, such that $\phi\geq 0$ and $\phi(T,\cdot)=0$, where $\sigma\in \mathcal{M}^{+}((0,T)\times \Omega)$ is a nonnegative measure such that
\begin{equation*}
\sigma\geq \frac{\mathbb{S}: \nabla u}{\theta}+ \frac{\kappa|\nabla \theta|^{2}}{\theta^{2}}
\end{equation*}

(4)The global balance of total energy
\begin{equation}\label{1.16}
\int_{\Omega} (\rho E)^{0} \phi(0) dx  +\int_{0}^{T}\int_{\Omega} \rho E \partial_{t} \phi dxdt=0
\end{equation}
holds for any smooth function $\phi(t)$, such that $\phi(T)=0$.
\end{defi}

Now, we are ready to formulate the main result of this paper.

\begin{theo}[Stability]\label{th1.1}
Let $\Omega$ be the periodic box $T^{3}$.  Assume that
the pressure $P$, the conductivity coefficient $\kappa$ and the viscosity coefficient $\mu$  satisfy the condition \eqref{1.4}-\eqref{1.8}. Let $(\rho_{n}, u_{n},\theta_{n})_{n\in \mathbb{N}}$ be sequence of weak solutions of \eqref{1.1}-\eqref{1.3} satisfying entropy inequalities \eqref{2.1}, \eqref{2.3} and \eqref{2.14}, with  initial data
\begin{equation*}
\rho_{n}|_{t=0}=\rho_{0}^{n}(x), ~~\rho_{n} u_{n}|_{t=0}=m_{0}^{n}(x)= \rho_{0}^{n}(x) u_{0}^{n}(x),~~\theta_{n}|_{t=0}= \theta_{0}^{n}(x),
\end{equation*}
where $\theta_{0}^{n}$, $u_{0}^{n}$ and $\theta_{0}^{n}$ are such that
\begin{equation}\label{1.17}
\rho_{0}^{n}\geq 0,~~\rho_{0}^{n} \rightarrow \rho_{0}~~~\in~ L^{1}(\Omega),~~\rho_{0}^{n}u_{0}^{n}\rightarrow \rho_{0}u_{0}~~~\in~ L^{1}(\Omega),~~\theta_{0}^{n} \rightarrow \theta_{0}~~~\in~ L^{1}(\Omega),
\end{equation}
and satisfying the following bounds (with $C$ constant independent on $n$):
\begin{equation}\label{1.18}
\int_{\Omega}\frac{|m_{0}|^{2}}{\rho_{0}}+ \rho_{0} \theta_{0}dx <C,~~\int_{\Omega}\nabla \sqrt{\rho_{0}} dx<C, ~~~ \int_{\Omega}\rho_{0} s_{0}<C ,
\end{equation}
Then, up to a subsequence, $(\rho_{n},\sqrt{\rho_{n}}u_{n},\theta_{n})$ converges strongly to a weak solution of \eqref{1.1}-\eqref{1.3} satisfying entropy inequalities \eqref{2.1}, \eqref{2.3} and \eqref{2.15}.
\end{theo}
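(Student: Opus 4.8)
The plan is to run the standard weak-compactness argument for sequential stability: first extract from the three inequalities \eqref{2.1}, \eqref{2.3} and \eqref{2.14} a collection of $n$-uniform bounds, then pass to weakly convergent subsequences, and finally upgrade weak convergence to strong convergence in exactly those quantities that enter the nonlinearities $\rho u\otimes u$, $P=\rho\theta$, $\mathbb{S}=2\rho D(u)$ and $\kappa(\rho,\theta)\nabla\theta$. Concretely, the total energy balance \eqref{1.16} furnishes $\sqrt{\rho_n}u_n$ bounded in $L^\infty(0,T;L^2(\Omega))$ and $\rho_n\theta_n$ bounded in $L^\infty(0,T;L^1(\Omega))$; the Bresch--Desjardins entropy \eqref{2.3} gives $\nabla\sqrt{\rho_n}$ bounded in $L^\infty(0,T;L^2(\Omega))$ together with the dissipation control $\sqrt{\rho_n}\,D(u_n)\in L^2((0,T)\times\Omega)$; and the entropy production inequality \eqref{2.14}, combined with the growth \eqref{1.8} of $\kappa$ for $a\ge 2$, controls $\nabla\theta_n^{a/2}$ and $\nabla\log\theta_n$ in $L^2((0,T)\times\Omega)$ and the production $\mathbb{S}_n:\nabla u_n/\theta_n+\kappa_n|\nabla\theta_n|^2/\theta_n^2$ in $L^1((0,T)\times\Omega)$. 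These bounds are the whole fuel of the argument.

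First I would treat the density. Since $\sqrt{\rho_n}$ is bounded in $L^\infty(0,T;H^1(\Omega))$ and, through the continuity equation \eqref{1.1} rewritten as $\partial_t\sqrt{\rho_n}+u_n\cdot\nabla\sqrt{\rho_n}+\tfrac12\sqrt{\rho_n}\,\mathrm{div}\,u_n=0$, the time derivative $\partial_t\sqrt{\rho_n}$ lies in a fixed negative-order space, an Aubin--Lions argument yields $\sqrt{\rho_n}\to\sqrt\rho$ strongly in $L^2((0,T)\times\Omega)$ and a.e., hence $\rho_n\to\rho$ strongly in $L^p$ for the exponents allowed by the uniform bounds. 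This already lets me pass to the limit in \eqref{1.13}. For the momentum equation \eqref{1.14} the obstacle is the convective term, which I write as $\rho_n u_n\otimes u_n=(\sqrt{\rho_n}u_n)\otimes(\sqrt{\rho_n}u_n)$: weak convergence of $\sqrt{\rho_n}u_n$ is not enough, so I would establish the Mellet--Vasseur refinement, namely a uniform bound on $\rho_n|u_n|^2\log(1+|u_n|^2)$, which provides just enough equi-integrability to promote $\sqrt{\rho_n}u_n$ to a strongly convergent sequence in $L^2$. The viscous term $\mathrm{div}(2\rho_n D(u_n))$ is handled by writing $\rho_n\nabla u_n=\sqrt{\rho_n}\cdot\sqrt{\rho_n}\nabla u_n$ and combining the strong convergence of $\sqrt{\rho_n}$ with the weak limit of $\sqrt{\rho_n}\nabla u_n$, the identification of the product being exactly where the BD structure is used.

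For the temperature I would use the gradient bounds coming from \eqref{2.14}: the control of $\nabla\theta_n^{a/2}$ with $a\ge 2$, interpolated against the $L^\infty_t L^1_x$ bound on $\rho_n\theta_n$, places $\theta_n$ in a space of the form $L^2(0,T;W^{1,p}(\Omega))$, while the temperature equation \eqref{1.11} provides the time-compactness needed for a second Aubin--Lions argument, yielding $\theta_n\to\theta$ strongly in some $L^q$ and a.e. Strong convergence of both $\rho_n$ and $\theta_n$ then passes the pressure $P_n=\rho_n\theta_n$ and the conductivity $\kappa(\rho_n,\theta_n)$ to their limits, so that $\kappa(\rho_n,\theta_n)\nabla\theta_n/\theta_n$ converges in the distributional sense required by \eqref{1.15}. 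Finally, to recover the limiting entropy inequality I would pass to the limit in \eqref{2.14}: the entropy fluxes pass by the strong convergences just obtained, while the production terms $\mathbb{S}_n:\nabla u_n/\theta_n$ and $\kappa_n|\nabla\theta_n|^2/\theta_n^2$ are convex in the gradients and hence weakly lower semicontinuous, so their weak limits are dominated by a nonnegative measure $\sigma$; this produces \eqref{2.15} with the correct sign and completes the identification of the triple $(\rho,\sqrt\rho\,u,\theta)$ as a weak solution.

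The step I expect to be the genuine difficulty is the strong compactness of $\sqrt{\rho_n}u_n$ in the presence of vacuum together with the temperature coupling: the Mellet--Vasseur estimate must be shown to survive the extra pressure work $\rho\theta\,\mathrm{div}\,u$ and heat exchange appearing in the energy and entropy balances, and one must make sense of $u_n$ only through $\sqrt{\rho_n}u_n$ on the set where $\rho$ degenerates. Closely related is the identification of the weak limit of $\rho_n\nabla u_n$ and of the production measure $\sigma$, since the nonlinear coupling $P=\rho\theta$ forbids treating the mechanical and thermal parts separately; handling these two points carefully is what makes the ideal-gas law \eqref{1.5}, rather than a barotropic pressure, the crux of the theorem.
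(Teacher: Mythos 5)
Your overall architecture --- uniform bounds from the energy, Bresch--Desjardins and entropy-production inequalities, strong convergence of $\sqrt{\rho_n}$ via Aubin--Lions on the continuity equation, the Mellet--Vasseur estimate to upgrade $\sqrt{\rho_n}u_n$ to strong $L^2$ convergence, and term-by-term passage to the limit --- is the same as the paper's, and your treatment of the density and of the convective and viscous terms matches Sections 2--4 (the paper handles the viscous term by integrating by parts against the test function, as in \eqref{4.1}, rather than by identifying the weak limit of $\sqrt{\rho_n}\nabla u_n$, but that is a minor variation).

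The genuine gap is in your temperature step. You propose to get time-compactness of $\theta_n$ from the internal energy equation \eqref{1.11}; this is precisely what fails here, and the paper flags it explicitly. To place $\partial_t(\rho_n\theta_n)$ in a fixed negative-order space one must bound the heat flux $\kappa(\rho_n,\theta_n)\nabla\theta_n$ at least in $L^1((0,T)\times\Omega)$, and its worst part is $\rho_n\theta_n^{a}\nabla\theta_n$. The available estimate \eqref{2.10} controls $(\sqrt{\rho_n}+1)\nabla\theta_n^{\alpha}$ in $L^2$ only for $\alpha\le a/2$, so writing $\rho_n\theta_n^{a}\nabla\theta_n=\bigl(\sqrt{\rho_n}\,\theta_n^{a/2-1}\nabla\theta_n\bigr)\bigl(\sqrt{\rho_n}\,\theta_n^{a/2+1}\bigr)$ one would need $\rho_n\theta_n^{a+2}$ bounded in $L^1$, which is out of reach: the interpolation argument of Section 3 only yields $\rho_n\theta_n^{a}$ in some $L^p$ with $p$ slightly larger than $1$. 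The paper circumvents this by extracting time regularity from the \emph{entropy} equation \eqref{1.12} instead, whose flux $\kappa\nabla\theta_n/\theta_n$ involves only $\theta_n^{a-1}\nabla\theta_n\sim\nabla\theta_n^{a}$ and is bounded in $L^p(0,T;L^q(\Omega))$ with $p,q>1$ by \eqref{3.18}--\eqref{3.19}. The variant of the Aubin--Lions lemma (Lemma \ref{lem:0206}) is then applied to the pair $g^n=\rho_ns_n$, $h^n=\theta_n$, giving convergence of the product $\rho_n s_n\theta_n$ to $\overline{\rho s}\,\theta$; combining with the strong convergence of $\rho_n$ this reduces to the identity $\overline{\theta\ln\theta}=\theta\,\overline{\ln\theta}$ on the set where $\rho>0$, and the strict convexity of $x\mapsto x\ln x$ then forces $\theta_n\to\theta$ a.e. Without this detour through the entropy balance, the pressure $\rho_n\theta_n$, the conductivity $\kappa(\rho_n,\theta_n)$ and the limiting entropy inequality all remain unidentified; this step is not a technicality but the central novelty of the compactness argument for the ideal-gas pressure law.
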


For this full compressible Navier-Stokes system in the constant viscosity case, Lions \cite{Lions} and Feireisl\cite{Firesel},\cite{Firesel2} proved the global existence of variational weak solutions. Such an existence result is obtained for specific pressure laws, given by general pressure equation
\begin{equation*}
P(\rho,\theta)= P_{b}(\rho)+ \theta P_{\theta}(\rho).
\end{equation*}
 Unfortunately, the perfect gas equation of state is not covered by this result. Namely the dominant role of the first, barotropic pressure $P_{b}$ is one of the key argument to obtain such an existence result.

As for the density depending viscosities case, the existence of global weal solutions to the Navier-Stokes equations for viscous compressible and heat conducting fluids was proved by D. Bresch and B. Desjardins \cite{Desjardin}. The equation of state is ideal polytropic gas type:
 \begin{equation*}
 P =R\rho \theta+ P_{c}(\rho),
 \end{equation*}
However, they still need additional cold pressure assumption $P_{c}$.  Therefore, Our aim in this work is to remove additional assumption on the equation of state. For related B-D entropy inequality, we refer the paper \cite{Bresch},\cite{Desjardins},\cite{Mellet}.

In order to prove the stability of variational weak solutions, the first step is to obtain suitable a priori bounds on $(\rho_{n},u_{n},\theta_{n})$.  The next step is to obtain compactness on  $(\rho_{n},u_{n},\theta_{n})$ in suitably strong topologies and prove that the limit $(\rho,u,\theta)$ satisfies Eqs.\eqref{1.1}-\eqref{1.10} in the variational sense.

In the forthcoming we will use the idea of Li and Xin \cite{Li} to construct approximate solution, thus we can complete the global existence of variational weak solution for non-isothermal Navier-Stokes-Fouier system.

This paper is organized as follows. In section $2$, we deduce a priori estimates from \eqref{1.1}. In section $3$, we establish the compactness of solutions $(\rho_{n},u_{n},\theta_{n})$. In section $4$, we will prove the proof of theorem 1.2 using Aubin-Lions Lemma.

\section{A priori bounds}
\quad\quad In this section, we collect the available a priori estimates for  sequence of smooth functions $\{\rho_{n}, u_{n}, \theta_{n} \}$ solving \eqref{1.1}-\eqref{1.3}. As mentioned above, assuming smoothness of solutions, we will deduce enough estimates to establish the compactness of solutions. The following estimates are valid for each $n=1,2,...$ but we skip the subindex it does not lead to any confusion.

\subsection{\bf{Estimates based on the maximum principle}}
First, it is easier to know that the total mass of the fluid is as constant of motion, i.e.
\begin{equation}\label{2.1}
\int_{\Omega} \rho(x,t) dx= \int_{\Omega} \rho_{0}dx=M_{0},~~~for~~t\in[0,T].
\end{equation}
Moreover maximum principle can be applied to the continuity equation in order to show that $\rho_{n}>c(n)\geq 0$, more precisely
\begin{equation}\label{2.2}
\rho_{n}(x,t)\geq \inf_{x\in \Omega} \rho_{n}^{0} exp(-\int_{0}^{T}\|{\rm div} u_{n}\|_{L^{\infty}(\Omega)}dt),
\end{equation}
in particular $\rho>0$.

Next, by a similar reasoning we can prove non-negatively of $\theta$ on $[0,T]\times \Omega$.
\begin{lemm}
Assume that $\theta=\theta_{n}$ is a smooth solution of (1.1), then
\begin{equation}\label{2.3}
\theta(t,x)>c(N)\geq 0,~~~for ~(t,x)\in [0,T]\times \Omega.
\end{equation}
\end{lemm}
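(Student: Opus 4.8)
The plan is to establish a strictly positive lower bound on $\theta$ by a minimum principle applied to the temperature equation \eqref{1.11}, mirroring the argument behind \eqref{2.2} for the density. First I would recast \eqref{1.11} in advective (nonconservative) form: expanding the left-hand side and invoking the continuity equation \eqref{1.1}, the terms $\theta[\partial_t\rho+\mathrm{div}(\rho u)]$ cancel, leaving
\begin{equation*}
\rho\big(\partial_t\theta + u\cdot\nabla\theta\big) = \mathbb{S}:\nabla u + \mathrm{div}(\kappa\nabla\theta) - \rho\theta\,\mathrm{div}\,u.
\end{equation*}
In the special case $\mu(\rho)=\rho$, $\lambda=0$ fixed in \eqref{1.4}, the dissipation is a perfect square, $\mathbb{S}:\nabla u = 2\rho\,D(u):\nabla u = 2\rho|D(u)|^2 \geq 0$, and by \eqref{2.2} we may divide through by $\rho>0$ to obtain
\begin{equation*}
\partial_t\theta + u\cdot\nabla\theta = \frac{1}{\rho}\mathbb{S}:\nabla u + \frac{1}{\rho}\mathrm{div}(\kappa\nabla\theta) - \theta\,\mathrm{div}\,u.
\end{equation*}

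Next I would track the spatial minimum $m(t):=\min_{x\in\Omega}\theta(t,x)$, which is well defined and Lipschitz in $t$ since $\Omega=T^{3}$ is compact and $\theta$ is smooth. For a.e.\ $t$ let $x_t$ be a point where the minimum is attained; because $T^{3}$ has no boundary, $x_t$ is an interior critical point, so $\nabla\theta(t,x_t)=0$ and the Hessian is positive semidefinite. Consequently the transport term vanishes, $u\cdot\nabla\theta=0$, and $\mathrm{div}(\kappa\nabla\theta)=\kappa\Delta\theta+\nabla\kappa\cdot\nabla\theta=\kappa\Delta\theta\geq 0$ at $x_t$ (here $\kappa>0$ wherever $\theta\geq 0$, by \eqref{1.8}). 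Evaluating the equation at $x_t$ and using Hamilton's differentiation formula $m'(t)=\partial_t\theta(t,x_t)$ gives, as long as $m(t)\geq 0$,
\begin{equation*}
m'(t) \geq -\theta(t,x_t)\,\mathrm{div}\,u(t,x_t) \geq -\|\mathrm{div}\,u\|_{L^{\infty}([0,T]\times\Omega)}\,m(t).
\end{equation*}
Setting $c_0:=\|\mathrm{div}\,u_n\|_{L^{\infty}([0,T]\times\Omega)}$, which is finite for the smooth solution $(\rho_n,u_n,\theta_n)$ with a bound possibly depending on $n$ (accounting for the $n$-dependence of the constant $c(N)$), Gronwall's inequality yields $m(t)\geq m(0)e^{-c_0 t}$. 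Since the initial datum is smooth and positive, $m(0)=\min_{x}\theta_0^{n}>0$, whence $\theta(t,x)\geq(\min_{x}\theta_0^{n})e^{-c_0 T}=:c(N)>0$ throughout $[0,T]\times\Omega$, the claimed bound.

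I expect the delicate point to be the regularity justification of the minimum-principle step: one must ensure $m(t)$ is differentiable a.e.\ with $m'(t)=\partial_t\theta(t,x_t)$, and that $\kappa(\rho,\theta)$ is genuinely positive at the minimizing point so the second-order contribution has the right sign. The latter is guaranteed a priori only while $\theta\geq 0$, so the argument is really a continuity/bootstrap statement showing $m(t)$ cannot reach $0$ in finite time. An equivalent and perhaps cleaner route, which avoids differentiating $m$, is to verify that the spatially constant function $\underline{\theta}(t)=(\min_{x}\theta_0^{n})e^{-c_0 t}$ is a subsolution of the parabolic operator $L\theta:=\partial_t\theta+u\cdot\nabla\theta-\tfrac{1}{\rho}\mathrm{div}(\kappa\nabla\theta)+\theta\,\mathrm{div}\,u$, since $L\underline{\theta}=\underline{\theta}(\mathrm{div}\,u-c_0)\leq 0\leq L\theta$, and then to invoke the parabolic comparison principle on the torus; the degeneracy of the coefficient $\rho$ near vacuum is harmless because $\rho>0$ is strict for each fixed $n$ by \eqref{2.2}.
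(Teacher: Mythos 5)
Your proof is correct and follows exactly the route the paper intends: the paper gives no written proof beyond the remark that positivity of $\theta$ follows ``by a similar reasoning'' to the maximum-principle bound \eqref{2.2} for the density, and your parabolic minimum-principle argument applied to the advective form of \eqref{1.11} is precisely that reasoning spelled out. The details you supply (nonnegativity of the dissipation $2\rho|D(u)|^{2}$, vanishing of $u\cdot\nabla\theta$ and of $\nabla\kappa\cdot\nabla\theta$ at an interior minimum on the torus, division by $\rho>0$ justified by \eqref{2.2}, and Gronwall with the $n$-dependent constant $\|\mathrm{div}\,u_{n}\|_{L^{\infty}}$) are all sound and consistent with the $n$-dependence of $c(N)$ in the statement.
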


\subsection{\bf{The energy-entropy estimates}}
The physical energy inequality (involving the internal and kinetic energy) is classical in the full compressible Navier-Stokes equations which is shown in the following:
\begin{lemm}[Physical energy estimates]\label{lm4.4}
\begin{equation} \label{2.4}
\int_{\Omega} \rho(\frac{|u|^{2}}{2}+ \theta)(t,x) dx \leq \int_{\Omega}(\frac{|m_{0}|^{2}}{\rho_{0}}+ \rho_{0} \theta_{0})dx
\end{equation}
\end{lemm}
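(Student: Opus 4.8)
The plan is to derive the physical energy inequality \eqref{2.4} by testing the momentum equation \eqref{1.2} with the velocity $u$ itself, thereby producing the kinetic energy balance, and then adding the internal energy balance that comes from integrating the temperature equation \eqref{1.11} over $\Omega$. First I would take the formal energy identity: multiplying \eqref{1.2} by $u$ and integrating over $\Omega$, the transport terms $\partial_t(\rho u)\cdot u$ and $\mathrm{div}(\rho u\otimes u)\cdot u$ combine (using the continuity equation \eqref{1.1}) into $\frac{d}{dt}\int_\Omega \frac{\rho|u|^2}{2}\,dx$; the pressure term yields $-\int_\Omega P\,\mathrm{div}\,u\,dx$; and the viscous term integrates by parts to $-\int_\Omega \mathbb{S}:\nabla u\,dx$. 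Since $\mathbb{S}:\nabla u = 2\mu(\rho)|D(u)|^2 + \lambda(\rho)(\mathrm{div}\,u)^2 \geq 0$ under the condition $3\lambda+2\mu\geq 0$, this viscous dissipation has a good sign.

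Next I would integrate the temperature equation \eqref{1.11} over $\Omega$. The convective and flux-divergence terms $\int_\Omega \mathrm{div}(\rho\theta u)\,dx$ and $\int_\Omega \mathrm{div}(\kappa\nabla\theta)\,dx$ vanish on the periodic torus $T^3$ by the divergence theorem, leaving
\begin{equation*}
\frac{d}{dt}\int_\Omega \rho\theta\,dx = \int_\Omega \mathbb{S}:\nabla u\,dx - \int_\Omega \rho\theta\,\mathrm{div}\,u\,dx.
\end{equation*}
Recalling that with $R=1$ the pressure is exactly $P=\rho\theta$, the term $-\int_\Omega \rho\theta\,\mathrm{div}\,u\,dx$ is precisely $-\int_\Omega P\,\mathrm{div}\,u\,dx$. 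The crucial observation is that when I add the kinetic energy identity to this internal energy identity, the pressure work $-\int_\Omega P\,\mathrm{div}\,u\,dx$ appearing in the kinetic balance is exactly cancelled by the matching $+\int_\Omega P\,\mathrm{div}\,u\,dx$ contribution, and the viscous dissipation $-\int_\Omega \mathbb{S}:\nabla u\,dx$ from the kinetic side is cancelled by the $+\int_\Omega \mathbb{S}:\nabla u\,dx$ heating term from the temperature equation. All right-hand-side contributions annihilate, leaving the exact conservation
\begin{equation*}
\frac{d}{dt}\int_\Omega \rho\Big(\frac{|u|^2}{2}+\theta\Big)\,dx = 0.
\end{equation*}

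Finally I would integrate this identity in time from $0$ to $t$ and use the initial data: $\int_\Omega \rho\big(\frac{|u|^2}{2}+\theta\big)(t)\,dx = \int_\Omega \big(\frac{|m_0|^2}{\rho_0}+\rho_0\theta_0\big)\,dx$, where $\frac{|m_0|^2}{\rho_0}$ is understood via the compatibility condition \eqref{1.10} on the vacuum set. This yields \eqref{2.4} as an equality, hence a fortiori the stated inequality. I would present the computation at the level of smooth solutions, as the lemma is stated for the smooth approximate sequence, so no regularization argument is needed here. The main subtlety — indeed the whole point that makes the ideal-gas pressure $P=\rho\theta$ tractable — is the exact algebraic cancellation between the pressure work and the viscous heating across the two balances; I would emphasize that this cancellation is what produces a clean conserved total energy rather than merely a dissipation inequality, and I would be careful that the sign conventions in the integration by parts of $\mathbb{S}:\nabla u$ and of $P\,\mathrm{div}\,u$ are consistent so that the cancellation is genuine.
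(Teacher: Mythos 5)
Your proof is correct, but it takes a longer route than the paper. The paper's proof is a single line: integrate the total energy equation \eqref{1.3} over the torus; every term under a divergence ($\mathrm{div}(\rho E u)$, $\mathrm{div}\,q$, $\mathrm{div}(Pu)$, $\mathrm{div}(\mathbb{S}u)$) vanishes by periodicity, so $\frac{d}{dt}\int_\Omega \rho E\,dx=0$, and since $\rho E=\rho\theta+\rho|u|^2/2$ (with $C_\nu=R=1$) this is exactly the claimed identity. You instead rebuild the total energy balance from its two pieces: the kinetic balance obtained by testing \eqref{1.2} with $u$, and the internal energy balance obtained by integrating \eqref{1.11}, then observe the cancellation of the pressure work $\int_\Omega P\,\mathrm{div}\,u\,dx$ and of the viscous heating $\int_\Omega \mathbb{S}:\nabla u\,dx$ between the two. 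This is mathematically sound for smooth solutions and is, in effect, a re-derivation of \eqref{1.3} from \eqref{1.2} and \eqref{1.11} --- the paper already notes these formulations are equivalent in the smooth setting. What your version buys is that it makes the cancellation structure explicit (useful if one only has the temperature equation and not the conservative total-energy form, e.g.\ at the level of an approximation scheme); what it costs is length, and a need for care with signs --- your statement of which side each of $-\int_\Omega P\,\mathrm{div}\,u\,dx$ and $-\int_\Omega \mathbb{S}:\nabla u\,dx$ sits on wobbles mid-argument, though the final cancellation you assert is the right one. Since \eqref{1.3} is part of the system being assumed, the paper's direct integration is the more economical proof.
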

\begin{proof}
Integrate \eqref{1.3} with respect to the space variable and employ the periodic boundary conditions.
\end{proof}
Assume the initial total energy is finite, we immediately obtain the following bounds:
\begin{equation}\label{2.5}
\|\sqrt{\rho} u\|_{L^{\infty}(0,T;L^{2}(\Omega))} \leq C,  \| \rho \theta\|_{L^{\infty}(0,T;L^{1}(\Omega))}\leq C,
\end{equation}

Next, we give some temperature estimates from the entropy equation:
\begin{lemm}[Entropy estimates]\label{lm4.4}
Assume that $\rho_{0} s_{0} \in L^{1}(\Omega)$, Then for all $T \geq 0$, one has
\begin{equation} \label{2.6}
\int_{0}^{T}\int_{\Omega} \frac{\kappa|\nabla \theta|^{2}}{\theta^{2}}+ \frac{2\rho |D(u)|^{2}}{\theta} dx dt\leq \int_{\Omega}\rho s + |\rho_{0} s_{0}|dx
\end{equation}
\end{lemm}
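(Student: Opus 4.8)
The plan is to integrate the entropy identity \eqref{1.12} directly, exploiting the periodicity of $\Omega=T^{3}$. First I would record that in the present setting $\mu(\rho)=\rho$, $\lambda(\rho)=0$, so that $\mathbb{S}=2\rho D(u)$ and, since $D(u)$ is symmetric,
\[
\mathbb{S}:\nabla u = 2\rho\, D(u):\nabla u = 2\rho |D(u)|^{2}.
\]
Hence the right-hand side of \eqref{1.12} is exactly the nonnegative integrand on the left of \eqref{2.6}, namely $\frac{\mathbb{S}:\nabla u}{\theta}+\frac{\kappa|\nabla\theta|^{2}}{\theta^{2}}=\frac{2\rho|D(u)|^{2}}{\theta}+\frac{\kappa|\nabla\theta|^{2}}{\theta^{2}}$. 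Both quantities are well defined because \eqref{2.2} and \eqref{2.3} guarantee $\rho>0$ and $\theta>0$ on $[0,T]\times\Omega$, so that $s=\ln\theta-\ln\rho$ is a smooth function of $(t,x)$.

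Next I would integrate \eqref{1.12} over $\Omega$. Since the solutions are smooth and $\Omega$ is periodic, the divergence terms ${\rm div}(\rho s u)$ and ${\rm div}(\kappa\nabla\theta/\theta)$ integrate to zero, leaving the entropy balance
\[
\frac{d}{dt}\int_{\Omega}\rho s\,dx=\int_{\Omega}\Big(\frac{2\rho|D(u)|^{2}}{\theta}+\frac{\kappa|\nabla\theta|^{2}}{\theta^{2}}\Big)dx .
\]
Integrating in time over $[0,T]$ and inserting the initial entropy $\rho_{0}s_{0}$ then yields
\[
\int_{0}^{T}\int_{\Omega}\Big(\frac{2\rho|D(u)|^{2}}{\theta}+\frac{\kappa|\nabla\theta|^{2}}{\theta^{2}}\Big)dx\,dt=\int_{\Omega}(\rho s)(T,\cdot)\,dx-\int_{\Omega}\rho_{0}s_{0}\,dx .
\]
Inequality \eqref{2.6} follows at once by bounding $-\int_{\Omega}\rho_{0}s_{0}\,dx\le\int_{\Omega}|\rho_{0}s_{0}|\,dx$ and identifying the terminal term $\int_{\Omega}(\rho s)(T,\cdot)\,dx$ with the $\int_{\Omega}\rho s$ on the right.

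The only delicate point, and the step I would treat with most care, is to verify that the terminal entropy $\int_{\Omega}(\rho s)(T,\cdot)\,dx$ is finite and, crucially for later use, bounded above uniformly in $n$, since $\rho s=\rho\ln\theta-\rho\ln\rho$ changes sign. I would split $\rho s$ into its two pieces. For the temperature part I would use the elementary inequality $\ln\theta\le\theta$, so that $\rho\ln\theta\le\rho\theta$, which is controlled in $L^{\infty}(0,T;L^{1}(\Omega))$ by the physical energy estimate \eqref{2.5}. For the density part I would use the pointwise bound $-\rho\ln\rho\le e^{-1}$, valid for all $\rho\ge0$ (the maximum of $-x\ln x$ on $[0,\infty)$, attained at $x=e^{-1}$), so that $\int_{\Omega}(-\rho\ln\rho)\,dx\le|\Omega|/e$. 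Together these give $\int_{\Omega}(\rho s)(T,\cdot)\,dx\le C$ uniformly, which both closes \eqref{2.6} and supplies the uniform entropy-production bound needed for the compactness arguments of Sections 3 and 4. Under the sole hypothesis $\rho_{0}s_{0}\in L^{1}(\Omega)$ the initial term is already finite, so no additional integrability assumption on the data is required.
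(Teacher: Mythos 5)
Your proof is correct and follows essentially the same route as the paper's: integrate the entropy identity \eqref{1.12} over the periodic box so the divergence terms vanish, integrate in time, identify $\mathbb{S}:\nabla u=2\rho|D(u)|^{2}$, and bound $-\int_{\Omega}\rho_{0}s_{0}\,dx\le\int_{\Omega}|\rho_{0}s_{0}|\,dx$. Your final paragraph bounding $\int_{\Omega}(\rho s)(T,\cdot)\,dx$ from above is not needed for the lemma as stated (the paper deliberately leaves $\int_{\Omega}\rho s\,dx$ on the right-hand side and only estimates it afterwards in \eqref{2.7}--\eqref{2.9}), though your pointwise bound $-\rho\ln\rho\le e^{-1}$ is a clean shortcut compared with the paper's subsequent argument via the renormalized continuity equation.
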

\begin{proof}
Integrate \eqref{1.12} with respect to the space variable and employ the periodic boundary conditions.
\end{proof}

The first term on the right hand of \eqref{2.6} can be estimated by:
\begin{equation}\label{2.7}
\int_{\Omega} \rho s dx \leq \int_{\Omega}\rho \ln\theta dx- \int_{\Omega} \rho \ln \rho dx,
\end{equation}
Multiplying the mass equation by $1+\ln \rho$, we get
\begin{equation}\label{2.8}
\partial_{t}(\rho \ln \rho) + {\rm div}(\rho \ln\rho u)+ \rho {\rm div} u=0,
\end{equation}
Therefore, the right hand of \eqref{2.6} can be estimated by
\begin{equation}\label{2.9}
\begin{aligned}
\int_{\Omega} \rho s dx &\leq \int_{\Omega} \rho \theta dx + \int_{\Omega} |\rho_{0} \ln \rho_{0}| dx + \int_{0}^{T} \int_{\Omega} \rho {\rm div} u dx dt \\
& \leq C+ \int_{0}^{T} \int_{\Omega} \sqrt{\frac{\rho}{\theta}}|{\rm div} u| \sqrt{\rho \theta} dx dt\\
& \leq C + \varepsilon \int_{0}^{T} \int_{\Omega}\frac{\rho|D(u)|^{2}}{\theta} dx dt + C(\varepsilon)
\int_{0}^{T} \int_{\Omega}\rho \theta dx dt,
\end{aligned}
\end{equation}

Hence if $\rho_{0} s_{0}$ and $\rho_{0} \ln \rho_{0}$ belong to $L^{1}(\Omega)$, then the component of following quantities $\sqrt{\rho} D(u) / \sqrt{\rho}$, $(\sqrt{\rho}+1) \nabla \theta^{a/2}$, $(\sqrt{\rho}+1) \nabla \ln \theta$ are bounded in $L^{2}(\Omega\times (0,T))$. We note that the last two bounds involving the temperature gradient provide the following useful estimates:
\begin{equation}\label{2.10}
(\sqrt{\rho}+1) \nabla \theta^{\alpha} \in L^{2}(\Omega\times (0,T)),~for~all~\alpha~such~ that ~0\leq\alpha \leq a/2.
\end{equation}

Now, we will derive some estimates on  velocity and associated effective B-D entropy energy.
\begin{lemm}[The kinetic energy estimates]\label{lm4.4}
\begin{equation} \label{2.11}
\frac{d}{dt}\int_{\Omega} \frac{1}{2}\rho |u|^{2}dx+ \int_{\Omega} 2\rho |D(u)|^{2} dx=\int_{\Omega} P {\rm div} u dx,
\end{equation}
\end{lemm}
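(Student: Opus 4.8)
The plan is to derive \eqref{2.11} directly from the momentum equation \eqref{1.2} by testing with the velocity field $u$, exactly as in the classical kinetic energy balance. Since the lemma concerns smooth solutions (as in the whole section of a priori bounds), every integration by parts is legitimate, and the periodic boundary of $\Omega=T^{3}$ kills all boundary contributions.

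First I would take the scalar product of \eqref{1.2} with $u$ and integrate over $\Omega$. The two convective contributions combine into a perfect time derivative once the continuity equation \eqref{1.1} is invoked: using $\partial_{t}\rho+{\rm div}(\rho u)=0$ one checks the pointwise identity
$$\partial_{t}(\rho u)\cdot u+{\rm div}(\rho u\otimes u)\cdot u=\partial_{t}\Big(\tfrac{1}{2}\rho|u|^{2}\Big)+{\rm div}\Big(\tfrac{1}{2}\rho|u|^{2}u\Big),$$
so that after integration the divergence term vanishes by periodicity and the left-hand side collapses to $\frac{d}{dt}\int_{\Omega}\frac{1}{2}\rho|u|^{2}\,dx$.

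Next I would treat the pressure term by integration by parts, $\int_{\Omega}u\cdot\nabla P\,dx=-\int_{\Omega}P\,{\rm div}\,u\,dx$, which produces exactly the right-hand side of \eqref{2.11}. For the viscous term I would integrate by parts to get $\int_{\Omega}u\cdot{\rm div}\,\mathbb{S}\,dx=-\int_{\Omega}\mathbb{S}:\nabla u\,dx$; because, under the stated choice $\mu(\rho)=\rho$, $\lambda(\rho)=0$, the stress $\mathbb{S}=2\mu(\rho)D(u)=2\rho D(u)$ is symmetric, it contracts only with the symmetric part of $\nabla u$, giving $\mathbb{S}:\nabla u=2\rho\,D(u):D(u)=2\rho|D(u)|^{2}$. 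Collecting the three contributions and transferring the dissipation to the left-hand side yields precisely \eqref{2.11}.

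Since all fields are assumed smooth, I do not anticipate any genuine obstacle; the only two points requiring care are the use of the continuity equation to recombine the transport terms into the time derivative of the kinetic energy, and the symmetry argument that replaces $D(u):\nabla u$ by $|D(u)|^{2}$. Both are routine for smooth $(\rho,u,\theta)$, and the factor $2$ in \eqref{2.11} is simply inherited from the Newtonian law \eqref{1.4} with $\mu(\rho)=\rho$.
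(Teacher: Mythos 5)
Your proposal is correct and follows exactly the route the paper intends: its proof of this lemma is the one-line instruction ``Multiply the momentum equation \eqref{1.2} by $u$ and integrate over $\Omega$,'' and you have simply carried out that computation in full (continuity equation to form the kinetic-energy time derivative, integration by parts on the pressure and viscous terms, symmetry of $\mathbb{S}=2\rho D(u)$ to obtain $2\rho|D(u)|^{2}$). No discrepancy or gap to report.
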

\begin{proof}
Multiply the momentum equation \eqref{1.2} by $u$ and integrate over $\Omega$.
\end{proof}

\begin{lemm}[B-D effective energy estimates]\label{lm4.4}
\begin{equation} \label{2.12}
\begin{aligned}
&\frac{d}{dt}\int_{\Omega} \frac{1}{2}\rho |u+ 2\nabla \ln \rho|^{2} dx + \int_{\Omega} 2\rho |A(u)|^{2}dx+ 2\int_{\Omega} \frac{|\nabla \rho|^{2} \theta}{\rho} dx\\
& = \int_{\Omega} P {\rm div} u dx- 2 \int_{\Omega} \nabla \rho\cdot \nabla \theta dx,
\end{aligned}
\end{equation}
\end{lemm}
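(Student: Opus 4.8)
The plan is to reproduce the Bresch--Desjardins (B--D) entropy computation, adapted to the ideal-gas pressure $P=\rho\theta$ from \eqref{1.5}. Since $\mu(\rho)=\rho$, the natural shift is $2\nabla\ln\rho$, and I would first expand the modified kinetic density pointwise as
\[
\frac{1}{2}\rho\,|u+2\nabla\ln\rho|^{2}=\frac{1}{2}\rho|u|^{2}+2\,u\cdot\nabla\rho+2\,\frac{|\nabla\rho|^{2}}{\rho},
\]
using $\rho\nabla\ln\rho=\nabla\rho$. The first term is handled by the kinetic energy identity \eqref{2.11}, so the task reduces to computing the time derivatives of the cross term $2\int_{\Omega}u\cdot\nabla\rho\,dx$ and of the density-gradient term $2\int_{\Omega}|\nabla\rho|^{2}/\rho\,dx$, and then summing the three contributions.

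For the latter two, I would use the continuity equation in the logarithmic form $\partial_{t}\ln\rho+u\cdot\nabla\ln\rho+{\rm div}\,u=0$; differentiating it in $x$ gives the transport equation for $\nabla\ln\rho$, from which $\frac{d}{dt}\int 2|\nabla\rho|^{2}/\rho$ follows. For the cross term I would substitute $\partial_{t}(\rho u)$ from the momentum equation \eqref{1.2} and $\partial_{t}\nabla\rho=-\nabla\,{\rm div}(\rho u)$ from the mass equation. Equivalently, and more transparently, I would first derive the evolution equation for the effective momentum $\rho(u+2\nabla\ln\rho)$ by adding the gradient relation $\partial_{t}(2\nabla\rho)=-2\nabla\,{\rm div}(\rho u)$ to \eqref{1.2}, and then test the resulting equation against $u+2\nabla\ln\rho$.

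The decisive point is the treatment of the pressure and of the viscosity. Pairing $\nabla P=\theta\nabla\rho+\rho\nabla\theta$ with the shift $2\nabla\ln\rho=2\nabla\rho/\rho$ produces
\[
2\int_{\Omega}\frac{\theta|\nabla\rho|^{2}}{\rho}\,dx+2\int_{\Omega}\nabla\rho\cdot\nabla\theta\,dx,
\]
the first being the good definite term kept on the left of \eqref{2.12}, the second the genuinely non-isothermal coupling term moved to the right; the $u$-pairing of the pressure supplies the remaining $\int_{\Omega}P\,{\rm div}\,u$ via \eqref{2.11}. For the viscosity, the key B--D cancellation for $\mu(\rho)=\rho$ is that, after substituting the equations and integrating by parts, the indefinite viscous cross terms generated by pairing ${\rm div}(2\rho D(u))$ with $2\nabla\ln\rho$ recombine with the $-\int 2\rho|D(u)|^{2}$ coming from \eqref{2.11}, so that exactly $-\int_{\Omega}2\rho|A(u)|^{2}\,dx$ survives, with $A(u)=\tfrac{1}{2}(\nabla u-\nabla^{T}u)$ the skew-symmetric part.

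I expect the main obstacle to be precisely this viscous recombination: verifying that every indefinite quadratic term in $\nabla u$ and $\nabla\ln\rho$ cancels, and that no stray contribution involving $\nabla^{2}\ln\rho$ is left behind, so that the dissipation collapses to the clean, sign-definite $2\rho|A(u)|^{2}+2\theta|\nabla\rho|^{2}/\rho$. All manipulations divide by $\rho$ and differentiate $\ln\rho$, which is legitimate because the strict positivity $\rho>0$ from \eqref{2.2} holds at the level of the smooth approximating solutions for which the estimate is derived.
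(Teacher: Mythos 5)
Your proposal is correct and is essentially the paper's own approach: the paper proves this lemma purely by citation to the original Bresch--Desjardins computation, and your outline is exactly that computation specialized to $\mu(\rho)=\rho$, $\lambda=0$, $P=\rho\theta$ --- adding $\partial_{t}(2\nabla\rho)=-2\nabla\,{\rm div}(\rho u)$ to the momentum equation to get $\partial_{t}(\rho w)+{\rm div}(\rho w\otimes u)+\nabla P={\rm div}\bigl(2\rho A(u)\bigr)$ for $w=u+2\nabla\ln\rho$, then testing with $w$. The cancellations you single out are indeed the whole point: the skew part $A(u)$ pairs to zero against both $D(u)$ and the symmetric Hessian $\nabla^{2}\ln\rho$, so the dissipation collapses to $2\rho|A(u)|^{2}$, while the pressure pairing with $2\nabla\ln\rho$ produces precisely $2\int_{\Omega}\theta|\nabla\rho|^{2}/\rho\,dx+2\int_{\Omega}\nabla\rho\cdot\nabla\theta\,dx$, yielding \eqref{2.12}.
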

\begin{proof}
The idea of the proof is from the original work of Bresch and Desjaedins. For more detail we refer to \cite{Desjardin}.
\end{proof}

In order to get enough a priori estimates from Lemma 2.4 and 2.5, we have to control the right-hand side terms of \eqref{2.11} and \eqref{2.12}.
\begin{lemm}[$\int_{\Omega} P {\rm div} u dx$]\label{lm4.4}
\begin{equation} \label{2.13}
\begin{aligned}
&\int_{\Omega} P {\rm div} u dx \\
\leq & \varepsilon \int_{\Omega} \rho |{\rm div} u|^{2}dx + C(\varepsilon) \int_{\Omega} \int_{\Omega} \rho \theta^{2} dx\\
\leq &\varepsilon \int_{\Omega} \rho |{\rm div} u|^{2}dx + C(\varepsilon) \|\theta\|_{L^{3}}^{2} \| \nabla \sqrt{\rho}\|_{L^{2}}^{2},
\end{aligned}
\end{equation}
\end{lemm}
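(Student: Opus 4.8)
The plan is to establish the two inequalities in \eqref{2.13} separately: the first is a weighted Young inequality that splits the pressure term symmetrically, and the second combines Hölder's inequality with the three-dimensional Sobolev embedding together with the conserved mass.

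First I would treat the first line. Since $P=\rho\theta$ by the equation of state \eqref{1.5} (with $R=1$), and since $\rho>0$ by \eqref{2.2}, I split the integrand symmetrically as
\begin{equation*}
\rho\,\theta\,{\rm div}\,u=\big(\sqrt{\rho}\,{\rm div}\,u\big)\big(\sqrt{\rho}\,\theta\big),
\end{equation*}
and apply Young's inequality $ab\le\varepsilon a^{2}+C(\varepsilon)b^{2}$ with $a=\sqrt{\rho}\,{\rm div}\,u$ and $b=\sqrt{\rho}\,\theta$. The squared terms are $a^{2}=\rho|{\rm div}\,u|^{2}$ and $b^{2}=\rho\theta^{2}$, so integrating over $\Omega$ yields precisely the first estimate $\int_{\Omega}P\,{\rm div}\,u\,dx\le\varepsilon\int_{\Omega}\rho|{\rm div}\,u|^{2}\,dx+C(\varepsilon)\int_{\Omega}\rho\theta^{2}\,dx$.

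For the second line I would bound $\int_{\Omega}\rho\theta^{2}\,dx$. Applying Hölder's inequality with conjugate exponents $3$ and $3/2$ gives
\begin{equation*}
\int_{\Omega}\rho\,\theta^{2}\,dx\le\|\rho\|_{L^{3}(\Omega)}\,\big\|\theta^{2}\big\|_{L^{3/2}(\Omega)}=\|\rho\|_{L^{3}(\Omega)}\,\|\theta\|_{L^{3}(\Omega)}^{2}.
\end{equation*}
It remains to control $\|\rho\|_{L^{3}}$ by the Dirichlet-type quantity $\|\nabla\sqrt{\rho}\|_{L^{2}}^{2}$. Rewriting $\|\rho\|_{L^{3}}=\|\sqrt{\rho}\|_{L^{6}}^{2}$ and invoking the Sobolev embedding $H^{1}(\Omega)\hookrightarrow L^{6}(\Omega)$, valid on the three-dimensional domain $\Omega=T^{3}$, I obtain
\begin{equation*}
\|\rho\|_{L^{3}(\Omega)}=\|\sqrt{\rho}\|_{L^{6}(\Omega)}^{2}\lesssim\|\sqrt{\rho}\|_{L^{2}(\Omega)}^{2}+\|\nabla\sqrt{\rho}\|_{L^{2}(\Omega)}^{2}.
\end{equation*}
Chaining the three displays then produces the stated bound.

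The only genuinely delicate point is the lower-order term $\|\sqrt{\rho}\|_{L^{2}(\Omega)}^{2}=\int_{\Omega}\rho\,dx=M_{0}$, which is the conserved (hence bounded) total mass by \eqref{2.1}; strictly speaking this leaves an additive constant, so that $\|\rho\|_{L^{3}}\lesssim\|\nabla\sqrt{\rho}\|_{L^{2}}^{2}+M_{0}$. To reach the displayed homogeneous form one either absorbs this constant through a Poincaré/Gagliardo--Nirenberg inequality on $T^{3}$ adapted to the fixed-mass constraint, or simply retains it as a harmless lower-order contribution, since in the subsequent B-D energy balance \eqref{2.12} this term enters multiplied by $\|\theta\|_{L^{3}}^{2}$, a quantity already controlled by the entropy bound \eqref{2.10} (note $\theta^{1/2}\in L^{6}$ gives $\theta\in L^{3}$). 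Everything else is a routine combination of Young's, Hölder's and Sobolev's inequalities, so I expect no further obstruction.
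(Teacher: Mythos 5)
Your proof is correct, and it is worth noting that the paper itself states this lemma with \emph{no} proof at all, so your argument (Young's inequality with the splitting $\rho\theta\,{\rm div}\,u=(\sqrt{\rho}\,{\rm div}\,u)(\sqrt{\rho}\,\theta)$, then H\"older with exponents $3,3/2$, then the Sobolev embedding $H^{1}(T^{3})\hookrightarrow L^{6}(T^{3})$ applied to $\sqrt{\rho}$) supplies exactly the missing reasoning, and it is the only natural route. More importantly, the ``delicate point'' you flag is a genuine correction to the paper, not a technicality: the second inequality of \eqref{2.13} as displayed is literally false, since for constant density $\rho\equiv\bar\rho>0$ the left side $\int_{\Omega}\rho\theta^{2}\,dx=\bar\rho\|\theta\|_{L^{2}}^{2}$ is positive while $\|\nabla\sqrt{\rho}\|_{L^{2}}=0$; the Sobolev inequality on the torus cannot dispense with the lower-order term $\|\sqrt{\rho}\|_{L^{2}}^{2}=M_{0}$. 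Your patched form
\begin{equation*}
\int_{\Omega} P\,{\rm div}\,u\,dx \le \varepsilon\int_{\Omega}\rho|{\rm div}\,u|^{2}\,dx
+ C(\varepsilon)\,\|\theta\|_{L^{3}}^{2}\left(\|\nabla\sqrt{\rho}\|_{L^{2}}^{2}+M_{0}\right)
\end{equation*}
is what is actually true, and, as you observe, it is just as good for the intended application: in the Gronwall argument following Lemma 2.7 the factor $\|\theta\|_{L^{3}}^{2}$ is integrable in time (by \eqref{2.10} and Sobolev embedding, $\theta\in L^{2}(0,T;L^{6}(\Omega))\subset L^{2}(0,T;L^{3}(\Omega))$), so the extra term $M_{0}\|\theta\|_{L^{3}}^{2}$ contributes only an integrable forcing term. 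No further repair is needed.
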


\begin{lemm}[$\int_{\Omega} \nabla \rho \cdot \nabla \theta dx$]\label{lm4.4}
\begin{equation} \label{2.14}
\begin{aligned}
&\int_{\Omega} \nabla \rho \cdot \nabla \theta dx \\
\leq & C\int_{\Omega} \frac{\rho \theta^{2}}{\kappa} |\nabla \sqrt {\rho}|^{2}dx + C \int_{\Omega} \frac{\kappa |\nabla \theta|^{2}}{\theta^{2}}\\
\leq &  C \int_{\Omega}  |\nabla \sqrt {\rho}|^{2}dx + C,
\end{aligned}
\end{equation}
\end{lemm}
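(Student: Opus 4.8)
The plan is to rewrite the product $\nabla\rho\cdot\nabla\theta$ so that one of its two factors is exactly the entropy dissipation density $\frac{\sqrt{\kappa}}{\theta}\nabla\theta$, which is already controlled through the entropy estimate \eqref{2.6}, and then to absorb the remaining factor into the Bresch--Desjardins dissipation $\int_{\Omega}|\nabla\sqrt{\rho}|^{2}\,dx$. First I would use the identity $\nabla\rho=2\sqrt{\rho}\,\nabla\sqrt{\rho}$ to split the weight as
\begin{equation*}
\nabla\rho\cdot\nabla\theta=\Bigl(2\sqrt{\rho}\,\tfrac{\theta}{\sqrt{\kappa}}\,\nabla\sqrt{\rho}\Bigr)\cdot\Bigl(\tfrac{\sqrt{\kappa}}{\theta}\,\nabla\theta\Bigr).
\end{equation*}
Applying Young's inequality to this product and integrating over $\Omega$ then gives
\begin{equation*}
\int_{\Omega}\nabla\rho\cdot\nabla\theta\,dx\le C\int_{\Omega}\frac{\rho\theta^{2}}{\kappa}|\nabla\sqrt{\rho}|^{2}\,dx+C\int_{\Omega}\frac{\kappa|\nabla\theta|^{2}}{\theta^{2}}\,dx,
\end{equation*}
which is exactly the first line of \eqref{2.14}. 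The choice of the splitting weight $\theta/\sqrt{\kappa}$ is the only nonobvious point here: it is dictated by the requirement that the second factor reproduce precisely the dissipation term $\kappa|\nabla\theta|^{2}/\theta^{2}$.

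For the second inequality I would treat the two resulting integrals separately. For the first one the decisive observation is that the weight $\frac{\rho\theta^{2}}{\kappa}$ is uniformly bounded. Indeed, inserting the form \eqref{1.8} of $\kappa$ yields
\begin{equation*}
\frac{\rho\theta^{2}}{\kappa}=\frac{1}{\kappa_{0}}\cdot\frac{\rho}{1+\rho}\cdot\frac{\theta^{2}}{1+\theta^{a}}\le\frac{1}{C_{1}}\cdot1\cdot C,
\end{equation*}
where I use $\kappa_{0}\ge C_{1}$, $\rho/(1+\rho)\le1$, and---crucially---the growth restriction $a\ge2$, which guarantees that $\theta^{2}/(1+\theta^{a})$ remains bounded uniformly in $\theta\ge0$ (since $2-a\le0$, the ratio is continuous, vanishes at $\theta=0$, and stays bounded as $\theta\to\infty$). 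Hence $C\int_{\Omega}\frac{\rho\theta^{2}}{\kappa}|\nabla\sqrt{\rho}|^{2}\,dx\le C\int_{\Omega}|\nabla\sqrt{\rho}|^{2}\,dx$. For the second integral, $\int_{\Omega}\frac{\kappa|\nabla\theta|^{2}}{\theta^{2}}\,dx$ is exactly the entropy dissipation, which by \eqref{2.6} defines an $L^{1}(0,T)$ function of $t$ with controlled norm; once \eqref{2.14} is inserted into the time-integrated B--D energy balance \eqref{2.12} this term therefore contributes only the harmless additive constant $C$. Combining the two bounds delivers the second line of \eqref{2.14}.

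The step I expect to be the genuine obstacle---as opposed to the Young inequality, which is routine---is precisely the uniform control of the weight $\frac{\rho\theta^{2}}{\kappa}$: this is where the structural hypothesis $a\ge2$ on the conductivity enters in an essential way, since without it the high-temperature regime would destroy the bound and the term could no longer be subsumed into $\int_{\Omega}|\nabla\sqrt{\rho}|^{2}\,dx$. I would also keep an eye on the vacuum set $\{\rho=0\}$, but since $\rho/(1+\rho)\le1$ the degeneracy is benign, and the factor $\sqrt{\rho}$ produced by the identity $\nabla\rho=2\sqrt{\rho}\,\nabla\sqrt{\rho}$ causes no difficulty.
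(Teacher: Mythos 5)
Your proof is correct and reconstructs exactly the argument the paper leaves implicit: the lemma is stated without any proof, but the intermediate line of \eqref{2.14} is precisely what your weighted Young inequality with the splitting $\nabla\rho\cdot\nabla\theta=\bigl(2\sqrt{\rho}\,\tfrac{\theta}{\sqrt{\kappa}}\nabla\sqrt{\rho}\bigr)\cdot\bigl(\tfrac{\sqrt{\kappa}}{\theta}\nabla\theta\bigr)$ produces, and the passage to the last line is exactly your uniform bound $\rho\theta^{2}/\kappa\leq C/C_{1}$ coming from \eqref{1.8} and $a\geq 2$. Your reading of the final ``$+\,C$'' as the entropy dissipation term controlled only after time integration via \eqref{2.6} (rather than pointwise in $t$, which is not available) is the correct interpretation and is indeed how the lemma is used when it is fed into the time-integrated B--D balance \eqref{2.12}.
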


Thus,  by taking $\varepsilon$ small enough, \eqref{2.10} and Sobolev inequality, $\theta\in L^{2}([0,T]; L^{6}(\Omega))\cap L^{2}([0,T]; L^{6}(\Omega))$, it is possible to get some a priori estimates via Gronwall's inequality.

From above we get the space compactness of the density and temperature, therefore the strong convergence of the density and temperature can be derived. But there  still is lack of information about the velocity. To this goal, in the following we can prove the so-called Mellet-Vasseur type estimate.

\begin{lemm}[Mellet-Vasseur type estimate]\label{lm4.4}
\begin{equation} \label{2.15}
\sup_{0\leq t\leq T} \int_{\Omega} \rho (1+|u|^{2}) \ln (1+|u|^{2}) dx \leq C,
\end{equation}
\end{lemm}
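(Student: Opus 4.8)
The plan is to test the momentum equation \eqref{1.2} against the vector field $\varphi=\big(1+\ln(1+|u|^{2})\big)u$, that is, to differentiate the quantity appearing in \eqref{2.15} directly. Writing $g(s)=(1+s)\ln(1+s)$ so that the weight is $g(|u|^{2})$, with $g'(s)=1+\ln(1+s)$ and $g''(s)=\frac{1}{1+s}$, and using the continuity equation \eqref{1.1} to recombine the time and convective contributions into $\rho\,\frac{D}{Dt}g(|u|^{2})$, one arrives at the identity
\begin{equation*}
\frac{d}{dt}\int_{\Omega}\rho\,g(|u|^{2})\,dx
=2\int_{\Omega}g'(|u|^{2})\,u\cdot\big(\mathrm{div}\,\mathbb{S}-\nabla P\big)\,dx .
\end{equation*}
The whole argument then reduces to bounding the right-hand side by $C\big(1+\int_{\Omega}\rho\,g(|u|^{2})\,dx\big)$ modulo terms absorbable into the viscous dissipation, after which Gronwall's inequality together with the uniform control of the initial data \eqref{1.18} yields \eqref{2.15}.

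For the viscous part I would integrate by parts and use $\mathbb{S}=2\rho D(u)$. Differentiating $g'(|u|^{2})u$ produces two types of terms: the principal one gives the good dissipation $-2\int_{\Omega}\rho\,g'(|u|^{2})\,|D(u)|^{2}\,dx\le 0$ (recall $g'\ge 1$), while the remainder carries a factor $g''(|u|^{2})$. Here the decisive structural point is the slow growth of the logarithm, quantified by $|u|^{2}g''(|u|^{2})=\frac{|u|^{2}}{1+|u|^{2}}\le 1$; this bounds the remainder pointwise by $C\rho|\nabla u|\,|D(u)|$, hence by Cauchy--Schwarz by $C\big(\int_{\Omega}\rho|\nabla u|^{2}\big)^{1/2}\big(\int_{\Omega}\rho|D(u)|^{2}\big)^{1/2}$. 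Both factors are time-integrable: $\sqrt{\rho}\,D(u)\in L^{2}$ by the energy identity \eqref{2.11}, and $\sqrt{\rho}\,\nabla u\in L^{2}$ by combining \eqref{2.11} with the B-D dissipation $\int_{\Omega}\rho|A(u)|^{2}$ in \eqref{2.12}. Thus the viscous remainder is dominated by an $L^{1}(0,T)$ function, as Gronwall requires.

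The genuine difficulty is the pressure term, which is where the ideal-gas law $P=\rho\theta$ enters. Integrating by parts yields
\begin{equation*}
-2\int_{\Omega}g'(|u|^{2})\,u\cdot\nabla P\,dx
=2\int_{\Omega}\rho\theta\Big(g'(|u|^{2})\,\mathrm{div}\,u+2g''(|u|^{2})\,\big((u\cdot\nabla)u\big)\cdot u\Big)\,dx .
\end{equation*}
The term with $g''$ is again tamed by $|u|^{2}g''\le1$ and handled like the viscous remainder. The main obstacle is $2\int_{\Omega}\rho\theta\,g'(|u|^{2})\,\mathrm{div}\,u\,dx$: the logarithmic velocity weight $g'\sim\ln(1+|u|^{2})$ must be reconciled with the merely low integrability of $\theta$. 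I would split $g'=1+\ln(1+|u|^{2})$ and absorb the highest part of the velocity derivative into the dissipation via a weighted Young inequality of the form $\ep\int_{\Omega}\rho\,g'(|u|^{2})|\mathrm{div}\,u|^{2}\,dx$ (admissible since $|\mathrm{div}\,u|^{2}\le 3|D(u)|^{2}$), leaving a term of the type $\int_{\Omega}\rho\theta^{2}\big(1+\ln(1+|u|^{2})\big)\,dx$.

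To close this last term I would exploit the temperature estimates already obtained --- $\rho\theta\in L^{\infty}(0,T;L^{1})$ from \eqref{2.5}, and $\theta\in L^{2}(0,T;L^{6})$ from \eqref{2.10} together with Sobolev embedding --- along with the kinetic bound $\sqrt{\rho}\,u\in L^{\infty}(0,T;L^{2})$ and the slow growth of the logarithm, interpolating so as to place the dangerous power of $\theta$ where its integrability lives while bounding the velocity factor by $\rho\,g(|u|^{2})$ modulo a time-integrable coefficient. The outcome should be an estimate $\frac{d}{dt}\int_{\Omega}\rho\,g(|u|^{2})\,dx\le C(t)\big(1+\int_{\Omega}\rho\,g(|u|^{2})\,dx\big)$ with $C\in L^{1}(0,T)$, whence Gronwall's inequality delivers \eqref{2.15}. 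Balancing the logarithmic velocity weight against the only $L^{2}(L^{6})$ temperature in this pressure term is the essential obstacle, and it is precisely the point at which the present ideal-gas law $P=\rho\theta$ departs from the barotropic pressure of the classical Mellet--Vasseur estimate.
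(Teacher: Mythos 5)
Your strategy is exactly the paper's: multiply \eqref{1.2} by $(1+\ln(1+|u|^{2}))u$, use \eqref{1.1} to form the material derivative, absorb the weighted dissipation on the left, integrate the pressure term by parts, and control the remainders through $|u|^{2}g''(|u|^{2})\le 1$ together with $\sqrt{\rho}\,\nabla u\in L^{2}$ (which is \eqref{3.333}) and $\sqrt{\rho}\,D(u)\in L^{2}$; all of the steps you actually execute are correct and coincide with the paper's computation. The gap is that the term you yourself single out as ``the essential obstacle'', namely
\begin{equation*}
\int_{\Omega}\rho\,\theta^{2}\bigl(1+\ln(1+|u|^{2})\bigr)\,dx ,
\end{equation*}
is never estimated: ``interpolating so as to place the dangerous power of $\theta$ where its integrability lives'' is a statement of intent, not a proof, and since this is the one point where the ideal-gas pressure $P=\rho\theta$ makes the lemma nontrivial, the proposal is incomplete precisely at its crux.

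For comparison, the paper closes this term by a cruder but explicit device: it discards the sharpness of the logarithm via $1+\ln(1+|u|^{2})\le C(1+|u|)$ and applies Young's inequality, $\rho\theta^{2}(1+|u|)\le \rho\theta^{2}+\rho|u|^{2}+\rho\theta^{4}$; the middle term is bounded by the physical energy \eqref{2.5}, while $\int\rho\theta^{2}$ and $\int\rho\theta^{4}$ are controlled after time integration by the temperature--density integrability coming from \eqref{2.10} and the B-D bound $\nabla\sqrt{\rho}\in L^{\infty}(0,T;L^{2})$, which give $\rho\in L^{\infty}(0,T;L^{3})$ and $\theta\in L^{2}(0,T;L^{6})$. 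In particular no Gronwall argument is needed: the right-hand side is directly a function integrable on $(0,T)$. (Your cross term $2g''\,((u\cdot\nabla)u)\cdot u\,\rho\theta$ is handled in the paper by the H\"older splitting $\rho\theta|\nabla u|\le \|\sqrt{\rho}\,\nabla u\|_{L^{2}}\|\theta\|_{L^{3}}\|\sqrt{\rho}\|_{L^{6}}$, consistent with your plan.) If you prefer to finish along your softer route, replace the linear bound by $\ln(1+|u|^{2})\le C_{\delta}(1+|u|^{2})^{\delta}$ for small $\delta>0$ and use H\"older with respect to the measure $\rho\,dx$:
\begin{equation*}
\int_{\Omega}\rho\,\theta^{2}(1+|u|^{2})^{\delta}dx
\le \Bigl(\int_{\Omega}\rho\,(1+|u|^{2})\,dx\Bigr)^{\delta}\Bigl(\int_{\Omega}\rho\,\theta^{2/(1-\delta)}dx\Bigr)^{1-\delta},
\end{equation*}
where the requirement $\rho\theta^{2+\eta}\in L^{1}((0,T)\times\Omega)$ for small $\eta$ follows by interpolating $\rho\theta\in L^{\infty}(L^{1})$, $\rho\in L^{\infty}(L^{3})$ and $\theta\in L^{2}(L^{6})$. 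This is in fact more robust than the paper's $\int\rho\theta^{4}$, which is delicate for $a$ near the stated lower bound $a=2$. Carrying out either version of this estimate is what is needed to make your proof whole.
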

\begin{proof}
Multiplying momentum equation \eqref{1.2} by $(1+\ln(1+|u|^{2}))u$ and integrating over $\Omega$ lead to
\begin{equation*}
\begin{aligned}
&\frac{1}{2} \frac{d}{dt} \int_{\Omega} \rho (1+|u|^{2}) \ln (1+|u|^{2}) dx + \int_{\Omega}  (1+\ln (1+|u|^{2}) \rho |D(u)|^{2}dx\\
& \leq C\int_{\Omega} \rho |D(u)|^{2} dx - \int_{\Omega} (1+\ln (1+|u|^{2})u\cdot \nabla (\rho \theta) dx,
\end{aligned}
\end{equation*}
where the last term on the right side can be estimated as follows:
\begin{equation*}
\begin{aligned}
&| \int_{\Omega} (1+\ln (1+|u|^{2})u\cdot \nabla (\rho \theta) dx|\\
\leq & \int_{\Omega} (1+\ln (1+|u|^{2}) {\rm div}u \rho \theta dx + \int_{\Omega} \frac{2u_{i}u_{k}}{1+|u|^{2}} \partial_{i} u_{k} \rho \theta dx \\
\leq &\varepsilon \int_{\Omega} (1+\ln (1+|u|^{2}) \rho |Du|^{2} dx+ C \int_{\Omega} (1+\ln (1+|u|^{2}) \rho \theta^{2} dx\\
 +& C\| \sqrt{\rho} \nabla u \|_{L^{2}(\Omega)}
\|\theta\|_{L^{3}(\Omega)} \|\sqrt{\rho} \|_{L^{6}(\Omega)}, \\
\leq &C + C \int_{\Omega} (1+|u|) \rho \theta^{2}dx\\
\leq &C + C \int_{\Omega}\rho |u|^{2} dx + C\int_{\Omega} \rho \theta^{4}dx\\
\leq & C
\end{aligned}
\end{equation*}
\end{proof}

\section{Compactness of $\rho_{n}, \sqrt{\rho_{n}}u_{n}, \theta_{n}$}
\quad\quad  We recall that the initial data must satisfy \eqref{2.1}, \eqref{2.3} and \eqref{2.15} to make use of all the inequalities presented in the previous section. More precisely, we take
\begin{equation}\label{3.1}
\rho_{0}^{n}~~ is~~ bounded~~ in~~ L^{1}(\Omega), \rho_{0}^{n}\geq 0~~ a.e.~~ in~~ \Omega ,
\end{equation}
\begin{equation}\label{3.2}
\rho_{0}^{n}|u_{0}^{n}|^{2}= |m_{0}^{n}|^{2}/\rho_{0}^{n} ~~is~~ bounded~~ in~~ L^{1}(\Omega),
\end{equation}
\begin{equation}\label{3.3}
\rho_{0}s_{0}~~and~~\rho_{0} \ln \rho_{0}~~is~~ bounded~~ in~~ L^{1}(\Omega),
\end{equation}
\begin{equation}\label{3.4}
\nabla \sqrt{\rho_{0}^{n}} ~~is~~ bounded~~ in~~ L^{2}(\Omega),
\end{equation}
\begin{equation}\label{3.5}
\int_{\Omega} \rho_{0}^{n} \frac{|u_{0}^{n}|^{2}}{2}\ln(1+ |u_{0}^{n}|^{2})dx< C ,
\end{equation}
Using inequalities \eqref{2.1}, \eqref{2.3},\eqref{2.11},\eqref{2.12} and \eqref{2.15}, we deduce the following estimates, which we shall use throughout the proof of Theorem 1.2:
\begin{equation}\label{3.6}
\|\sqrt{\rho_{n}} u_{n}\|_{L^{\infty}(0,T;L^{2}(\Omega))} \leq C,
\end{equation}
\begin{equation}\label{3.333}
\|\sqrt{\rho_{n}} \nabla u_{n}\|_{L^{2}(0,T;L^{2}(\Omega))} \leq C,
\end{equation}
\begin{equation}\label{3.7}
\|\rho_{n}\|_{L^{\infty}(0,T;L^{1}(\Omega))} \leq C,
\end{equation}
\begin{equation}\label{3.8}
\|\nabla \sqrt{\rho_{n}}\|_{L^{\infty}(0,T;L^{2}(\Omega))} \leq C,
\end{equation}
\begin{equation}\label{3.9}
\|(1+\sqrt{\rho_{n}}) \nabla \theta_{n}^{\alpha}\|_{L^{\infty}(0,T;L^{2}(\Omega))} \leq C,
\end{equation}
\begin{equation}\label{3.10}
\int_{\Omega} \rho_{n} \frac{|u_{n}|^{2}}{2}\ln(1+ |u_{n}|^{2})dx< C ,
\end{equation}
Given above a priori bounds, we now intend to study the compactness of sequences of approximate solutions
 $\rho_{n}$, $\sqrt{\rho_{n}}u_{n}$ and $\theta_{n}$ and pass the limit in the nonlinear terms.

\subsection{\bf {Strong convergence of $\sqrt{\rho_{n}}$}}

\begin{lemm}
Up to a subsequence,
\begin{equation}\label{3.11}
\sqrt{\rho_{n}} \rightarrow \sqrt{\rho} ~a.e.~~ and~~~L^{2}_{loc}((0,T)\times \Omega)~~strong.
\end{equation}
In particular,
\begin{equation}\label{3.12}
\rho_{n}\rightarrow \rho ~~~~in~ C^{0}(0,T;L^{3/2}_{loc}(\Omega)),
\end{equation}
\end{lemm}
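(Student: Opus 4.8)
The plan is to obtain \eqref{3.11} from the Aubin--Lions lemma applied to the sequence $\sqrt{\rho_n}$, for which I need a uniform spatial bound in $H^1(\Omega)$ together with a uniform bound on the time derivative in a negative Sobolev space. The spatial bound is immediate from the estimates already in hand: the mass bound \eqref{3.7} gives $\|\sqrt{\rho_n}\|_{L^\infty(0,T;L^2(\Omega))}^2=\|\rho_n\|_{L^\infty(0,T;L^1(\Omega))}\le C$, while \eqref{3.8} controls $\nabla\sqrt{\rho_n}$ in $L^\infty(0,T;L^2(\Omega))$. Hence $\sqrt{\rho_n}$ is bounded in $L^\infty(0,T;H^1(\Omega))$, and by the Sobolev embedding $H^1(\Omega)\hookrightarrow L^6(\Omega)$ on the torus, $\sqrt{\rho_n}$ is bounded in $L^\infty(0,T;L^6(\Omega))$, equivalently $\rho_n$ is bounded in $L^\infty(0,T;L^3(\Omega))$.

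For the time regularity the genuine difficulty is the degeneracy of the viscosity: near the vacuum neither $u_n$ nor $\nabla u_n$ is controlled, only the combinations $\sqrt{\rho_n}\,u_n$ and $\sqrt{\rho_n}\,\nabla u_n$ furnished by \eqref{3.6} and \eqref{3.333}. The key point is therefore to express $\partial_t\sqrt{\rho_n}$ using only these good quantities. Dividing the continuity equation \eqref{1.1} by $2\sqrt{\rho_n}$ and rearranging yields the identity
\begin{equation*}
\partial_t\sqrt{\rho_n} = -\,\mathrm{div}\big(\sqrt{\rho_n}\,u_n\big) + \tfrac{1}{2}\sqrt{\rho_n}\,\mathrm{div}\,u_n .
\end{equation*}
The first term is the spatial divergence of a field bounded in $L^\infty(0,T;L^2(\Omega))$ by \eqref{3.6}, hence bounded in $L^\infty(0,T;H^{-1}(\Omega))$; the second term is bounded in $L^2(0,T;L^2(\Omega))$ by \eqref{3.333}. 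Consequently $\partial_t\sqrt{\rho_n}$ is bounded in $L^2(0,T;H^{-1}(\Omega))$. I regard this identity, together with the (a priori only formal) integrations by parts behind the $H^{-1}$ bound, as the main obstacle: they are singular where $\rho_n$ vanishes, but on the approximate level they are legitimate because \eqref{2.2} keeps $\rho_n$ smooth and strictly positive, and the resulting bounds are uniform in $n$, which is all that the limit passage needs.

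With $\sqrt{\rho_n}$ bounded in $L^2(0,T;H^1(\Omega))$ and $\partial_t\sqrt{\rho_n}$ bounded in $L^2(0,T;H^{-1}(\Omega))$, the compact embedding $H^1(\Omega)\hookrightarrow\hookrightarrow L^2(\Omega)$ and the Aubin--Lions lemma give, up to a subsequence, $\sqrt{\rho_n}\to\sqrt{\rho}$ strongly in $L^2((0,T)\times\Omega)$, and hence a.e., which is precisely \eqref{3.11}. To reach \eqref{3.12} I would combine this a.e. limit with a second compactness argument at the level of $\rho_n$ itself: since $\nabla\rho_n=2\sqrt{\rho_n}\,\nabla\sqrt{\rho_n}$ is bounded in $L^\infty(0,T;L^{3/2}(\Omega))$ by Hölder (using $\sqrt{\rho_n}\in L^\infty L^6$ and $\nabla\sqrt{\rho_n}\in L^\infty L^2$), the density $\rho_n$ is bounded in $L^\infty(0,T;W^{1,3/2}(\Omega))$, while $\partial_t\rho_n=-\mathrm{div}(\rho_n u_n)$ with $\rho_n u_n=\sqrt{\rho_n}\cdot\sqrt{\rho_n}u_n$ bounded in $L^\infty(0,T;L^{3/2}(\Omega))$ is bounded in $L^\infty(0,T;W^{-1,3/2}(\Omega))$. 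The compact embedding $W^{1,3/2}(\Omega)\hookrightarrow\hookrightarrow L^{3/2}(\Omega)$ (Rellich--Kondrachov) and the Aubin--Lions--Simon criterion for continuity in time then make $\rho_n$ relatively compact in $C^0(0,T;L^{3/2}(\Omega))$; identifying the limit with $\rho$ through the a.e. convergence already established yields \eqref{3.12}.
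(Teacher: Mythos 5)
Your proof is correct and is essentially the argument the paper invokes: the paper simply cites Lemma 4.1 of \cite{Mellet}, and your derivation (the identity $\partial_t\sqrt{\rho_n}=-\mathrm{div}(\sqrt{\rho_n}u_n)+\tfrac12\sqrt{\rho_n}\,\mathrm{div}\,u_n$ justified by the positivity \eqref{2.2}, the $L^\infty(0,T;H^1)$ bound from \eqref{3.7}--\eqref{3.8}, Aubin--Lions for \eqref{3.11}, then the $W^{1,3/2}$/$W^{-1,3/2}$ bounds and Aubin--Lions--Simon for \eqref{3.12}) is precisely the Mellet--Vasseur proof. No gaps.
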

\begin{proof}
The proof is refer to Lemma 4.1 in \cite{Mellet}.
\end{proof}

\subsection{\bf {Strong convergence of $\sqrt{\rho_{n}}u_{n}$}}

\begin{lemm}
Up to a subsequence,
\begin{equation}\label{3.13}
\rho_{n} u_{n}\rightarrow \rho u ~~strongly~in~~ L^{2}(0,T; L^{p}_{loc}(\Omega)),~~for~p\in~[1,3/2).
\end{equation}
\begin{equation}\label{3.14}
\sqrt{\rho_{n}} u_{n}\rightarrow \sqrt{\rho} u ~~strongly~in~~ L^{2}_{loc}((0,T)\times\Omega),
\end{equation}
\end{lemm}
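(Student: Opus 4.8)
The plan is to prove the two statements in order: first promote the momentum $m_n=\rho_n u_n$ to strong convergence by an Aubin-Lions compactness argument, yielding \eqref{3.13}, and then bootstrap to \eqref{3.14} using the already-established strong convergence of $\sqrt{\rho_n}$ (Lemma 3.1) together with the Mellet-Vasseur bound \eqref{3.10}. Throughout I would work over a fixed compact $K\subset\Omega$ and time interval, since all claims are local.

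For \eqref{3.13}, I would first collect spatial bounds on $m_n$. Writing $m_n=\sqrt{\rho_n}\,(\sqrt{\rho_n}u_n)$ and using \eqref{3.6}, \eqref{3.7}, \eqref{3.8} (so that $\sqrt{\rho_n}$ is bounded in $L^\infty(0,T;L^6)$ by the Sobolev embedding $H^1\hookrightarrow L^6$) gives $m_n$ bounded in $L^\infty(0,T;L^{3/2})$. Differentiating, $\nabla m_n=2(\sqrt{\rho_n}u_n)\otimes\nabla\sqrt{\rho_n}+\sqrt{\rho_n}\,(\sqrt{\rho_n}\nabla u_n)$, and the factors are controlled by \eqref{3.6},\eqref{3.8} and \eqref{3.333}, so $m_n$ is bounded in $L^2(0,T;W^{1,1}(\Omega))$. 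For time regularity I would read $\partial_t m_n$ off \eqref{1.2}: the convective tensor $\rho_n u_n\otimes u_n=(\sqrt{\rho_n}u_n)\otimes(\sqrt{\rho_n}u_n)$ is bounded in $L^\infty(0,T;L^1)$, the viscous stress $\mathbb{S}_n=2\sqrt{\rho_n}(\sqrt{\rho_n}D(u_n))$ in $L^2(0,T;L^{3/2})$, and the pressure $P_n=\rho_n\theta_n$ in $L^2(0,T;L^2)$, using $\rho_n\in L^\infty(0,T;L^3)$ and $\theta_n\in L^2(0,T;L^6)$ from \eqref{3.9} and Sobolev. Hence $\partial_t m_n$ is bounded in $L^2(0,T;W^{-k,r}(\Omega))$ for suitable $k,r$. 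With the compact embedding $W^{1,1}_{loc}\hookrightarrow\hookrightarrow L^p_{loc}$ for $p<3/2$ in dimension three and $L^p\hookrightarrow W^{-k,r}$, Aubin-Lions gives $m_n\to m$ strongly in $L^2(0,T;L^p_{loc})$, which is \eqref{3.13}. The limit is identified as $m=\rho u$ by combining the strong convergence of $\sqrt{\rho_n}$ with the weak-$*$ limit $V$ of $\sqrt{\rho_n}u_n$: since $m_n=\sqrt{\rho_n}(\sqrt{\rho_n}u_n)\rightharpoonup\sqrt{\rho}\,V$, one has $m=\sqrt{\rho}\,V$, and defining $u=V/\sqrt{\rho}$ on $\{\rho>0\}$ and $u=0$ otherwise gives $m=\rho u$.

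For \eqref{3.14}, the bound \eqref{3.6} shows $\sqrt{\rho_n}u_n$ is bounded in $L^\infty(0,T;L^2)$, so along a subsequence it converges weakly in $L^2_{loc}$ to the limit $V=\sqrt{\rho}\,u$ identified above. To upgrade to strong convergence I would apply Vitali's theorem to $\rho_n|u_n|^2=|\sqrt{\rho_n}u_n|^2$. Equi-integrability of this sequence over $(0,T)\times\Omega$ follows from the Mellet-Vasseur estimate \eqref{3.10} via the de la Vallée Poussin criterion, since $z\mapsto z\ln(1+z)$ is superlinear. Convergence in measure on $\{\rho>0\}$ is immediate from the a.e. convergences $m_n\to m$ and $\rho_n\to\rho$, because there $\rho_n|u_n|^2=|m_n|^2/\rho_n\to|m|^2/\rho=\rho|u|^2$. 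Vitali then yields $\|\sqrt{\rho_n}u_n\|_{L^2_{loc}}\to\|\sqrt{\rho}\,u\|_{L^2_{loc}}$, and norm convergence together with weak convergence in the Hilbert space $L^2_{loc}$ gives the desired strong convergence.

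The main obstacle is the behavior on the vacuum set $\{\rho=0\}$, where $u_n$ is uncontrolled and a.e.\ convergence of $\sqrt{\rho_n}u_n$ cannot be expected directly; the crux is to show $\rho_n|u_n|^2\to0$ in measure there. On the set where $\rho_n|u_n|^2>\varepsilon$ one has $|u_n|^2>\varepsilon/\rho_n$, so the integrand in \eqref{3.10} is at least $\varepsilon\ln(1+\varepsilon/\rho_n)$; splitting this set according to whether $\rho_n\le\eta$ or $\rho_n>\eta$, using $\rho_n\to0$ a.e.\ on $\{\rho=0\}$ (which makes the second piece negligible as $n\to\infty$) and the uniform $L^1$ bound \eqref{3.10} (which bounds the first piece by $C/(\varepsilon\ln(1+\varepsilon/\eta))\to0$ as $\eta\to0$), forces the measure of the set to zero. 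This is exactly where the logarithmic Mellet-Vasseur weight is indispensable and is the heart of the argument; once \eqref{3.13} and this measure-convergence are in hand, the remaining steps are routine.
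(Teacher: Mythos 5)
The paper itself offers no argument for this lemma: its ``proof'' is a one-line citation of Lemma 4.6 in \cite{Mellet}, and your proposal is essentially a reconstruction of that proof. The two pillars are identical: (i) Aubin--Lions compactness for the momentum $m_n=\rho_n u_n$, with $m_n$ bounded in $L^\infty(0,T;L^{3/2})$, $\nabla m_n$ controlled through $\nabla\sqrt{\rho_n}$ and $\sqrt{\rho_n}\nabla u_n$, and $\partial_t m_n$ read off the momentum equation \eqref{1.2}; (ii) the logarithmic Mellet--Vasseur bound \eqref{3.10} to upgrade weak convergence of $\sqrt{\rho_n}u_n$ to strong $L^2$ convergence, the vacuum set being the only delicate point. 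Where you differ is the packaging of step (ii): Mellet--Vasseur truncate at a velocity level $M$, prove strong convergence of $\sqrt{\rho_n}u_n\mathbf{1}_{|u_n|\le M}$ (on the vacuum set this is dominated by $M\sqrt{\rho_n}\to 0$), and control the tails by $C/\ln(1+M^2)$; you instead invoke Vitali's theorem (convergence in measure plus equi-integrability), with an explicit splitting in $\rho_n\le\eta$ versus $\rho_n>\eta$ to show $\rho_n|u_n|^2\to0$ in measure on $\{\rho=0\}$. This is a legitimate repackaging rather than a genuinely different route; your vacuum estimate is correct and plays exactly the role of the domination $M\sqrt{\rho_n}$ in the original. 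One presentational point: the final ``norm convergence plus weak convergence'' step needs the weak limit of $\sqrt{\rho_n}u_n$ to vanish on $\{\rho=0\}$, which is precisely what your measure argument supplies, so that argument must come before, not after, the identification $V=\sqrt{\rho}\,u$.

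There is, however, one step that fails as literally written: equi-integrability of $\rho_n|u_n|^2$ does \emph{not} follow from \eqref{3.10} ``via de la Vall\'ee Poussin,'' because the logarithmic weight in \eqref{3.10} is $\ln(1+|u_n|^2)$, not $\ln(1+\rho_n|u_n|^2)$; the bounded quantity is not of the form $\int G(\rho_n|u_n|^2)$ with $G$ superlinear. (Consider $\rho_n$ large and $|u_n|\sim 1$: then $\rho_n|u_n|^2\ln(1+\rho_n|u_n|^2)$ vastly exceeds $\rho_n|u_n|^2\ln(1+|u_n|^2)$, and $\rho_n$ is only bounded in $L^\infty(0,T;L^{3})$, not in $L^\infty$.) The repair is the standard truncation, which additionally uses equi-integrability of $\rho_n$ itself: for any measurable $E\subset(0,T)\times\Omega$,
\begin{equation*}
\iint_E \rho_n|u_n|^2\,dx\,dt
\le M^2\iint_E\rho_n\,dx\,dt
+\frac{1}{\ln(1+M^2)}\iint_{(0,T)\times\Omega}\rho_n|u_n|^2\ln(1+|u_n|^2)\,dx\,dt
\le C M^2|E|^{2/3}+\frac{C}{\ln(1+M^2)},
\end{equation*}
where the first term is estimated by H\"older using the uniform bound on $\rho_n$ in $L^\infty(0,T;L^3(\Omega))$ (a consequence of \eqref{3.7}, \eqref{3.8} and Sobolev embedding), and one then optimizes in $M$ as $|E|\to0$. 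With this two-line fix your argument is complete and coincides in substance with the proof the paper cites.
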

\begin{proof}
The proof is refer to Lemma 4.6 in \cite{Mellet}.
\end{proof}

\subsection{\bf {Strong convergence of the temperature }}
By \eqref{3.9} and  Sobolev imbedding gives the estimate of the norm of $\theta$ in $L^{2}(0,T;L^{6}(\Omega))$, and so, due to the boundedness of $\nabla \theta^{\frac{\alpha}{2}}$ in $L^{2}((0,T)\times \Omega)$, one gets
\begin{equation*}
\theta^{\frac{\alpha}{2}}\in L^{2}(0,T;W^{1,2}(\Omega)),
\end{equation*}

Therefore we deduce existence of a subsequence such that
\begin{equation}\label{3.15}
\theta_{n}\rightarrow\theta \ weakly \ in \ L^{2}(0,T;W^{1,2}(\Omega)),
\end{equation}
however, time-compactness cannot proved directly from the internal energy equation \eqref{1.11}. The reason for this is lack of control over a part of the heat flux proportional to $\rho_{n}\theta_{n}^{a}\nabla\theta_{n}$. This obstacle can be overcome by deducing analogous information from the entropy equation \eqref{1.12}.

We will first show that all of the terms appearing in the entropy balance \eqref{1.12} are nonnegative or belong to $W^{-1,p}((0,T)\times \Omega)$, for some $p>1$.
Indeed, first we recall that due to (2.8)
\begin{equation*}
 |\rho_{N}s_{n}|\leq C (\rho_{n} |\ln \theta_{n}|+ \rho_{n}|\ln \rho_{n}|)
\end{equation*}
and
\begin{equation*}
 |\rho_{n}s_{n}u_{n}|\leq C ( |\rho_{n}\ln \theta_{n}u_{n}|+ |\rho_{n}\ln \rho_{n}u_{n}|)
\end{equation*}
whence due to \eqref{3.6},\eqref{3.7} and \eqref{3.15} we deduce that
\begin{equation}\label{3.16}
 \{\rho_{n}s_{n}\}_{n=1}^{\infty} \ in \ bounded \ in \ L^{2}((0,T)\times \Omega),
\end{equation}
moreover
\begin{equation}\label{3.17}
 \{\rho_{n}s_{n}u_{n}\}_{n=1}^{\infty} \ in \ bounded \ in \ L^{2}(0,T;L^{\frac{6}{5}} (\Omega)),
\end{equation}
The entropy flux can be estimated as follows
\begin{equation}\label{3.18}
 |\frac{\kappa(\rho_{n},\theta_{n})\nabla\rho_{n}}{\theta_{n}}|\leq |\nabla\ln\theta_{n}|+|\rho_{n}\nabla\ln\theta_{n}|
 +|\theta_{n}^{\alpha-1}\nabla\theta_{n}|+|\rho_{n}\theta_{n}^{\alpha-1}\nabla\theta_{n}|
\end{equation}
where the most restrictive can be controlled as follows $|\rho_{n}\theta_{n}^{\alpha-1}\nabla\theta_{n}|\leq
|\sqrt{\rho_{n}}\theta_{n}^{\frac{\alpha}{2}}|
|\sqrt{\rho_{n}}\nabla\theta_{n}^{\frac{\alpha}{2}}|$, which is bounded on account of \eqref{3.9} provided $\rho_{n}\theta_{n}^{\alpha}$ is bounded in $L^{p}((0,T)\times \Omega)$ for $p>1$, uniformly with respect to n. Note that for $0\leq\beta\leq1$ we have $\rho_{n}\theta_{n}^{\alpha}
=(\rho_{n}\theta_{n})^{\beta}\rho_{n}^{1-\beta}\theta_{n}^{\alpha-\beta}$, where $(\rho_{n}\theta_{n})^{\beta},\rho_{n}^{1-\beta},\theta_{n}^{\alpha-\beta}$ are uniformly bounded in $L^{\infty}(0,T;L^{\frac{1}{\beta}}(\Omega)),L^{\infty}(0,T;L^{\frac{3}{1-\beta}}(\Omega)),
L^{\frac{\alpha}{\alpha-\beta}}(0,T;L^{\frac{3\alpha}{\alpha-\beta}}(\Omega))$, respectively. Therefore
\begin{equation}\label{3.19}
 \{\frac{\kappa(\rho_{n},\theta_{n})\nabla\rho_{n}}{\theta_{n}}\}_{n=1}^{\infty} \ is \ bounded \ in \ L^{p}(0,T;L^{q}(\Omega)).
\end{equation}
for p and q satisfying $\frac{1}{p}=\frac{\alpha}{\alpha-\beta},
\frac{1}{q}=\beta+\frac{1-\beta}{3}+\frac{\alpha-\beta}{3\alpha}$. In particular $p,q>1$ provided $0<\beta<\frac{2\alpha}{2\alpha-1}$.

We are now ready to proceed with the proof of the strong convergence of the temperature. To this end we will need the following variant of the Aubin-Lions Lemma.

\begin{lemm}\label{lem:0206}
Let $g^{n}$ converges weakly to g in $L^{p_{1}}(0,T;L^{p_{2}}(\Omega))$ and let $h^{n}$ converges weakly to h in $L^{q_{1}}(0,T;L^{q_{2}}(\Omega))$, where $1\leq p_{1},p_{2}\leq \infty$ and
\begin{equation}\label{3.20}
\frac{1}{p_{1}}+\frac{1}{q_{1}}=\frac{1}{p_{2}}+\frac{1}{q_{2}}=1
\end{equation}
Let us assume in addition that
\begin{equation}\label{3.21}
\frac{\partial g^{n}}{\partial t} \ is \ bounded \ in \ L^{1}(0,T;W^{-m,1}(\Omega)) \ for \
some \ m\geq0 \ independent\ of \ n.
\end{equation}
\begin{equation}\label{3.22}
\|h^{n}-h^{n}(\cdot+\xi,t)\|_{L^{q_{1}}(0,T;L^{q_{2}}(\Omega))}\rightarrow 0,\ uniformly \ in \ n.
\end{equation}
Then $g^{n}h^{n}$ converges to gh in the sense of distributions on $\Omega\times (0,T)$.
\end{lemm}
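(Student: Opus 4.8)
The plan is to prove the distributional convergence $g^{n}h^{n}\to gh$ by testing against an arbitrary $\phi\in\mathcal{D}((0,T)\times\Omega)$, combining a spatial mollification acting on $g^{n}$ with the classical Aubin--Lions lemma. Let $\omega_{\delta}$ be an even spatial mollifier on the torus $T^{3}$ and write $g^{n}_{\delta}=g^{n}\ast\omega_{\delta}$ for convolution in the $x$-variable only; since $\omega_{\delta}$ is even, convolution is self-adjoint, i.e. $\int_{\Omega}f_{\delta}\,w\,dx=\int_{\Omega}f\,w_{\delta}\,dx$. The starting point is the decomposition
\[
\int_{0}^{T}\!\int_{\Omega} g^{n}h^{n}\phi\,dx\,dt
=\int_{0}^{T}\!\int_{\Omega}(g^{n}-g^{n}_{\delta})\,h^{n}\phi\,dx\,dt
+\int_{0}^{T}\!\int_{\Omega}g^{n}_{\delta}\,h^{n}\phi\,dx\,dt
=:A_{n,\delta}+B_{n,\delta},
\]
which I would estimate by a $\delta$-then-$n$ limiting procedure. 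Note that the weak convergences guarantee that $g^{n}$ is bounded in $L^{p_{1}}(0,T;L^{p_{2}}(\Omega))$ and $h^{n}$ is bounded in $L^{q_{1}}(0,T;L^{q_{2}}(\Omega))$, and that \eqref{3.20} makes these spaces dual in both variables.

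For $A_{n,\delta}$ I would use self-adjointness to move the mollifier onto the smooth factor, writing $A_{n,\delta}=\int_{0}^{T}\!\int_{\Omega}g^{n}\big(h^{n}\phi-(h^{n}\phi)_{\delta}\big)\,dx\,dt$. The error $h^{n}\phi-(h^{n}\phi)_{\delta}$ is an average over $|\xi|\le\delta$ of the increments $(h^{n}\phi)(x)-(h^{n}\phi)(x-\xi)$, which I would split as $[h^{n}(x)-h^{n}(x-\xi)]\phi(x)+h^{n}(x-\xi)[\phi(x)-\phi(x-\xi)]$. The first piece is controlled uniformly in $n$ by $\|\phi\|_{\infty}\sup_{|\xi|\le\delta}\|h^{n}-h^{n}(\cdot+\xi,t)\|_{L^{q_{1}}(L^{q_{2}})}$, which tends to $0$ as $\delta\to0$ uniformly in $n$ precisely by the equicontinuity hypothesis \eqref{3.22}; the second piece is bounded by $C\delta\,\|h^{n}\|_{L^{q_{1}}(L^{q_{2}})}$ using the Lipschitz bound on the smooth $\phi$. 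Hence $\|h^{n}\phi-(h^{n}\phi)_{\delta}\|_{L^{q_{1}}(L^{q_{2}})}\le\eta(\delta)$ with $\eta(\delta)\to0$ independently of $n$, and Hölder with \eqref{3.20} yields $|A_{n,\delta}|\le C\,\eta(\delta)$ uniformly in $n$.

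For $B_{n,\delta}$ I would first establish, for each fixed $\delta$, the strong convergence $g^{n}_{\delta}\to g_{\delta}$ in $L^{p_{1}}(0,T;L^{p_{2}}(\Omega))$. Indeed, mollification gains spatial derivatives, so $g^{n}_{\delta}$ is bounded in $L^{p_{1}}(0,T;W^{1,p_{2}}(\Omega))$, while $\partial_{t}g^{n}_{\delta}=(\partial_{t}g^{n})\ast\omega_{\delta}$ is bounded in $L^{1}(0,T;L^{p_{2}}(\Omega))$ since convolving the $W^{-m,1}$ bound \eqref{3.21} with the smooth kernel $\omega_{\delta}$ produces a constant $C_{\delta}$ times the $L^{1}(W^{-m,1})$ norm. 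The compact embedding $W^{1,p_{2}}(\Omega)\hookrightarrow\hookrightarrow L^{p_{2}}(\Omega)$ on the torus then gives, through the Aubin--Lions lemma, relative compactness of $g^{n}_{\delta}$ in $L^{p_{1}}(0,T;L^{p_{2}}(\Omega))$; the weak limit identifies the strong limit as $g_{\delta}$, so the whole sequence converges. Consequently $g^{n}_{\delta}\phi\to g_{\delta}\phi$ strongly in $L^{p_{1}}(L^{p_{2}})$, and pairing this against the weakly (weak-$\ast$) convergent $h^{n}\rightharpoonup h$ in the dual space gives $B_{n,\delta}\to\int_{0}^{T}\!\int_{\Omega}g_{\delta}h\phi\,dx\,dt$ as $n\to\infty$. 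Finally, since $g_{\delta}\to g$ strongly in $L^{p_{1}}(L^{p_{2}})$ as $\delta\to0$, the limit $\int_{0}^{T}\!\int_{\Omega}g_{\delta}h\phi\to\int_{0}^{T}\!\int_{\Omega}gh\phi$. Assembling the three bounds by the triangle inequality — choose $\delta$ small to make $|A_{n,\delta}|$ and the $\delta$-error small uniformly in $n$, then $n$ large for the strong--weak term — yields the claim.

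The main obstacle I anticipate is the strong convergence of the mollified sequence $g^{n}_{\delta}$ in Step $B$: one must verify the regularity bookkeeping that turns the negative-regularity time bound \eqref{3.21} into an $L^{1}(0,T;L^{p_{2}})$ bound after mollification, and confirm that the exponents permit the compact Sobolev embedding (so that Aubin--Lions applies). The only other delicate point is the uniform-in-$n$ smallness of the mollification error on $h^{n}\phi$, which rests entirely on the translation-equicontinuity assumption \eqref{3.22}; everything else reduces to Hölder's inequality and elementary limiting arguments.
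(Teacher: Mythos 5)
The paper itself contains no proof of this lemma: its entire ``proof'' is the one--line citation ``For the proof see \cite{Firesel}, Lemma 5.1'' (the result is Lions' product--compactness lemma, cf.\ \cite{Lions}). So the only meaningful comparison is with the classical argument in that literature, and your proposal is essentially that argument: smooth one factor in space, use the translation--equicontinuity hypothesis \eqref{3.22} to make the smoothing error small \emph{uniformly in} $n$, and use the time--derivative bound \eqref{3.21} together with Aubin--Lions to get strong compactness of the smoothed factor, so that a strong--weak pairing passes to the limit. The one structural choice where you deviate from the usual presentation is which factor gets mollified: the standard route mollifies $h^{n}$ (the factor carrying hypothesis \eqref{3.22}), so that $h^{n}-h^{n}_{\delta}$ is small directly and the compactness step is applied to $g^{n}$ itself through a negative--order embedding such as $L^{p_{2}}\hookrightarrow\hookrightarrow W^{-1,p_{2}}$; you instead mollify $g^{n}$ and transpose the mollifier onto $h^{n}\phi$ via the evenness of the kernel. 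The two variants are equivalent in substance; yours has the mild advantage that Aubin--Lions is invoked in a positive--regularity space $W^{1,p_{2}}$.

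There is, however, one step that genuinely fails at an endpoint admitted by the statement ($1\le p_{1}\le\infty$). When $q_{1}=1$, hence $p_{1}=\infty$, your Step B asserts that Aubin--Lions yields relative compactness of $g^{n}_{\delta}$ in $L^{\infty}(0,T;L^{p_{2}}(\Omega))$. With $\partial_{t}g^{n}_{\delta}$ bounded only in $L^{1}$ in time this is false: take $g^{n}(t,x)=\psi(nt)v(x)$ with $\psi$ a fixed smooth bump supported in $(0,1)$; this family is bounded in $L^{\infty}(0,T;W^{1,p_{2}})$, has $\partial_{t}g^{n}$ bounded in $L^{1}(0,T;L^{p_{2}})$, converges to $0$ in every $L^{r}(0,T;L^{p_{2}})$ with $r<\infty$, yet no subsequence converges in the $L^{\infty}(0,T;L^{p_{2}})$ norm (Simon's theorem gives $C([0,T];B)$--compactness only when the time derivative is bounded in $L^{r}$, $r>1$). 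Since the strong--weak pairing against $h^{n}\in L^{1}(0,T;L^{q_{2}})$ requires convergence of $g^{n}_{\delta}$ exactly in the dual norm $L^{\infty}(0,T;L^{p_{2}})$, the argument as written does not close in this case. The gap is repairable: weak $L^{1}$--convergence of $h^{n}$ gives, by Dunford--Pettis, uniform integrability of $t\mapsto\|h^{n}(t)\|_{L^{q_{2}}}$, and combining this with the $L^{r}$--convergence ($r<\infty$) and the uniform $L^{\infty}(0,T;L^{p_{2}})$ bound of $g^{n}_{\delta}-g_{\delta}$ still annihilates $\int_{0}^{T}\int_{\Omega}(g^{n}_{\delta}-g_{\delta})h^{n}\phi\,dx\,dt$. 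A caveat of the same flavor affects your final claim that $g_{\delta}\to g$ strongly in $L^{p_{1}}(L^{p_{2}})$, which fails when $p_{1}$ or $p_{2}$ is infinite (there, a.e.\ convergence plus domination suffices). None of this matters for the paper's application, where $p_{1}=p_{2}=q_{1}=q_{2}=2$, but as a proof of the lemma \emph{as stated} you should either add these repairs or restrict the exponent range.
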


For the proof see \cite{Firesel}, Lemm5.1.

Taking $g^{n}=\rho_{n}s_{n}$ and $h^{n}=\theta_{n}$ we verify, due to \eqref{3.4}, that conditions \eqref{3.20} and \eqref{3.22} are satisfied with $p_{1},p_{2},q_{1},q_{2}=2$. Moreover for m sufficiently large $L^{1}(\Omega)$ is imbedded into $W^{-m,1}$, thus by the previous considerations, condition \eqref{3.21} is also fulfilled. Therefore, passing to the subsequences we may deduce that
\begin{equation*}
 \lim_{n\rightarrow\infty} \rho_{n}s(\rho_{n},\theta_{n})\theta_{n}=
 \overline{\rho s(\rho,\theta)}\theta.
\end{equation*}
On the other hand, $\rho_{n}$ converges to $\rho$ a.e. on $(0,T)\times \Omega$, hence
$\overline{\rho s(\rho,\theta)}\theta=\rho\overline{ s(\rho,\theta)}\theta$, in particular, we have that
\begin{equation}\label{3.23}
 C_{\mu}\overline{\rho\ln \theta}\theta-R\overline{\rho\ln \rho}=C_{\mu}\rho\overline{\ln \theta}\theta-R\rho\overline{\ln \rho}\theta
\end{equation}
Combining  weak convergence of the temperature with strong convergence of the density we identify
\begin{equation}\label{3.24}
R\rho\overline{\ln \rho}\theta=R\rho\ln \rho\theta
\end{equation}
so \eqref{3.24} implies that $\rho\overline{\ln \theta\theta}=\rho\overline{\ln \theta}\theta$. This in return yields that $\overline{\ln \theta\theta}=\ln \theta\theta$ a.e. on $(0,T)\times \Omega$,since $\rho>0$ a.e. on $(0,T)\times \Omega$, which, due to convexity of function $x \ln x$, gives rise to
\begin{equation}\label{3.25}
\theta_{n}\rightarrow\theta \ a.e.\ on \ (0,T)\times \Omega
\end{equation}

\section{Proof of the Theorem 1.2}
\quad\quad To finish the proof of Theorem 2.1, we need to check that the limit $\rho,u,\theta$ are indeed the weak solutions, as defined in the introduction. We will complete this proof by several steps.

Step 1. Convergence of the mass conservation equation. For the mass conservation, by the strong convergences of $\rho_{n}$ to $\rho$ in $C([0,T];L^{3/2}(\Omega))$ and the strong convergence of $\sqrt{\rho}u$ in $L^{2}(0,T; L^{2}_{loc}(\Omega))$,  the mass conservation equation \eqref{1.1} is satisfied in the sense of distribution.

Step 2. Convergence of the momentum conservation equation. For the momentum equation, the strong convergence of $\rho_{n}u_{n}$ and $\rho_{n}u_{n}\otimes u_{n}$ $L^{1}((0,T)\times \Omega)$ can ensure the passing to limit in the sense of distribution for the two corresponding term in the momentum conservation equation \eqref{1.2}. On the other hand, since $\rho_{n}$ and $\theta_{n}$ respectively converge strongly in $C(0,T;L^{3/2}(\Omega))$ and $L^{2}((0,T)\times \Omega)$, the term $\nabla(\rho_{n}\theta_{n})$ converges to the limit $\nabla(\rho\theta)$ in the sense of distribution. As for the convergence of the  viscous term, we rewrite this term as follows:
\begin{equation}\label{4.1}
\int_{0}^{T}\int_{\Omega} \rho_{n} \nabla u_{n} \phi dx dt=- \int_{0}^{T}\int_{\Omega} \sqrt{\rho_{n}} \sqrt{\rho_{n}} u_{n} \nabla \phi dx dt-  \int_{0}^{T}\int_{\Omega} \sqrt{\rho_{n}}u_{n} \nabla \sqrt{\rho_{n}}\phi dx dt,
\end{equation}
where $\phi$ be a test function. Since $\sqrt{\rho_{n}}$ converges strongly to $\sqrt{\rho}$ in $L^{\infty}(0,T;L^{2}(\Omega))$ and $\sqrt{\rho_{n}} u_{n}$ converges strongly to $\sqrt{\rho}u$ in $L^{2}(0,T;L^{2}(\Omega))$, the first term on the right-hand of \eqref{4.1} converges to the corresponding term in the sense of distribution. The converges of the second term on the right-hand side of \eqref{4.1} in the sense of distribution can be shown by the weak convergence $\nabla{\sqrt{\rho_{n}}}$ and the strong converges in $L^{2}(0,T;L^{2}(\Omega))$.

Step 3. Convergence of the entropy equation. In view of \eqref{3.16}-\eqref{3.18}, it is easy to pass to the limit $n\rightarrow\infty$ in all terms appearing in \eqref{1.12}, except the entropy production rate $\sigma$.

However, in accordance with \eqref{2.6} we still have that
\begin{equation*}
\{\sqrt{\frac{\rho_{n}}{\theta_{n}}}D(u_{n}) \}_{n=1}^{\infty}.
\end{equation*}
is bounded in $L^{2}((0,T)\times \Omega)$. Moreover, by virtue of \eqref{3.333}, \eqref{3.12} and \eqref{3.26} we deduce
\begin{equation*}
\sqrt{\frac{\rho_{n}}{\theta_{n}}}D(u_{n})\rightarrow \sqrt{\frac{\rho}{\theta}}D(u) .
\end{equation*}
Evidently, we may treat all the remaining terms
\begin{equation*}\label{0224}
\{\frac{\sqrt{\kappa(\rho_{n},\theta_{n})}}{\theta_{n}} \nabla \theta_{n} \}_{n=1}^{\infty}.
\end{equation*}
in the similar way using the fact that they are linear with respect to the weakly convergent sequences of gradients of $\rho_{n},u_{n},\theta_{n}$. Thus, preserving the sign of the entropy inequality \eqref{3.12} in the limit $n\rightarrow \infty$ follows by the lower semicontinuity of convex superposition of operators.

Step 4. Convergence of the total energy balance. It is straight to pass the limit $n\rightarrow \infty$ in the total energy balance.

\phantomsection
\addcontentsline{toc}{section}{\refname}

\end{document}